\documentclass[12pt,a4paper]{article}
\usepackage[T1]{fontenc}
\usepackage[utf8]{inputenc}
\usepackage{lmodern,amsmath,amssymb,amsthm,mathtools}
\usepackage[margin=28mm]{geometry}
\usepackage{microtype}
\usepackage[shortlabels]{enumitem}
\usepackage{xcolor}
\usepackage[colorlinks=true,linkcolor=blue!45!black,citecolor=blue!45!black,urlcolor=blue!45!black]{hyperref}
\hypersetup{pdftitle={Every commutative perfect ring is a quotient of an almost perfect domain},pdfauthor={}}
\numberwithin{equation}{section}
\newtheorem{theorem}{Theorem}[section]
\newtheorem{lemma}[theorem]{Lemma}
\newtheorem{proposition}[theorem]{Proposition}
\newtheorem{corollary}[theorem]{Corollary}
\theoremstyle{definition}
\newtheorem{definition}[theorem]{Definition}
\newtheorem{example}[theorem]{Example}
\newtheorem{remark}[theorem]{Remark}
\newcommand{\Jac}{\operatorname{Jac}}
\newcommand{\Spec}{\operatorname{Spec}}
\newcommand{\Max}{\operatorname{Max}}
\newcommand{\Frac}{\operatorname{Frac}}

\newcommand{\QQ}{\mathbb Q}
\newcommand{\ZZ}{\mathbb Z}
\newcommand{\FF}{\mathbb F}

\newcommand{\calQ}{\mathcal Q}
\newcommand{\calZ}{\mathcal Z}

\newcommand{\ep}{\varepsilon}
\setlist[enumerate]{itemsep=3pt,topsep=5pt}
\title{Every commutative perfect ring is a quotient\ of an almost perfect domain}
\author{Xiaolei Zhang \thanks{School of Mathematics and Statistics, Tianshui Normal University, Tianshui 741000, China; E-mail: zxlrghj@163.com}}
\date{}
\begin{document}
	\maketitle
	\begin{abstract}
In this paper, we show that every nonzero commutative perfect ring is a quotient of a
		one-dimensional semilocal almost perfect domain of characteristic zero.
		The domain can be chosen to have as many maximal ideals as the given ring
		has local factors.  The construction uses coefficient discrete
		valuation rings, triangular presentations of purely inseparable residue
		extensions, and approximation for finitely many independent rank-one
		valuations. Independent error variables keep the construction inside a
		field. A normal-form argument shows that the divided elements used to get
		perfect principal quotients do not kill any further good monomial. In
		positive residue characteristic, the choice $a=2c+1$ with $c\ge n$
		protects the nonzero powers of $p$ in a target ring of characteristic
		$p^n$. We also give local and mixed-characteristic consequences, and
		examples with non-nilpotent radicals.
	\end{abstract}
	\noindent\textbf{Keywords.} Perfect ring; almost perfect domain; $T$-nilpotent
	ideal; coefficient ring; valuation; quotient ring.
	
	\smallskip
	\noindent\textbf{2020 Mathematics Subject Classification.}
	13G05, 13A18, 13B30, 13C11.
	
	\section{Introduction}
	
	Throughout this paper, all rings are commutative with identity and all modules are unital. All	choices in the proof are made in ZFC.

	Perfect rings were introduced by Bass in his work on projective covers
	and big finitistic dimension, that is a  ring $R$ is perfect if and only every $R$-module has a projective cover, if and only if its big finitistic dimension is $0$ \cite{Bass}. Moreover, a ring is
	perfect if and only if it is a finite product of local rings with
	$T$-nilpotent maximal ideals. Standard accounts of perfect rings can be found in
	\cite{AF,Lam}.
	
	Smith studied local domains with topologically $T$-nilpotent radical
	\cite{Smith}. Bazzoni and Salce developed the theory of almost perfect
	domains (abbreviates APDs) and its relation with strongly flat covers and big finitistic dimension, that is, an integral domain $D$ is almost perfect if and only if every $D$-module has a strongly flat cover if and only if its big finitistic dimension is at most $1$  
	\cite{BS02,BS03}. In particular, they proved the quotient criterion: an integral domain $D$ is an almost perfect domain  if $D/I$ is a perfect ring for
	every nonzero proper ideal $I$. This definition suggests the converse
	question which is proposed by Salce \cite{Salce}: 
	\begin{center}
		\textbf{Question 1.}  Is every perfect commutative ring $R$ the quotient of an APD?
	\end{center}
	
	The main motivation of this paper is to resolve this question for any perfect commutative rings. 	The realization problem has two separate difficulties. A local perfect
	ring need not have a coefficient field. Even in finite characteristic,
	its residue field may contain a purely inseparable extension that cannot
	be lifted as a field. Moreover, the product of domains realizing the
	local factors is not a domain. A construction must therefore keep both
	the residue extensions and the radical relations, while placing all
	local factors in a common field.
	
	Our main result is the following.
	
	\begin{theorem}\label{thm:main}
		Let $B$ be a nonzero commutative perfect ring, and let $r=|\Max(B)|$.
		There are a characteristic-zero, one-dimensional semilocal almost
		perfect domain $D$ and a nonzero proper ideal $I$ of $D$ such that
		\[
		D/I\cong B,\qquad |\Max(D)|=r.
		\]
		The domain can be chosen with an element $0\ne g\in I$ such that
		$D/gD$ is perfect and $g\in\Jac(D)$.
	\end{theorem}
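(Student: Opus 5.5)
We reduce to the local case and then glue. Write $B\cong B_1\times\cdots\times B_r$ with each $(B_i,\mathfrak m_i,k_i)$ local and $\mathfrak m_i$ $T$-nilpotent. The aim is a field $K$ of characteristic zero and $r$ pairwise independent rank-one (discrete) valuations $v_1,\dots,v_r$ on $K$ with valuation rings $V_i$. We take $D$ to be a suitable subring of $V_1\cap\cdots\cap V_r$ whose localizations at its $r$ maximal ideals sit inside the $V_i$. A principal element $g\in D$ will be chosen with positive value at every $v_i$, so $g\in\Jac(D)$, and with $D/gD\cong\prod_i D_{\mathfrak p_i}/gD_{\mathfrak p_i}$ perfect. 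Then $I\supseteq gD$ will be chosen with $D/I\cong B$. For the APD property we invoke the Bazzoni--Salce criterion. We show that $D$ is one-dimensional, semilocal, with every nonzero ideal containing a power-type element $h$ for which $D/hD$ is perfect. Since quotients of perfect rings are perfect, this makes every $D/J$ with $0\neq J$ proper perfect.

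\emph{Local step.} First fix a coefficient structure for each $B_i$. If $\operatorname{char}B_i=p^n$ or $0$, take a complete discrete valuation ring $W_i$ of characteristic zero with uniformizer $p$ (or $\ZZ$-localized, or $\QQ$-type in equal characteristic zero). Its residue field is a separable part of $k_i$, and it maps to $B_i$ by Cohen structure theory applied to the image of a suitable subring. The purely inseparable remainder of $k_i$ over this image is presented triangularly: choose generators $\bar t_\alpha$ (well-ordered) with $\bar t_\alpha^{p^{e_\alpha}}$ in the field generated by earlier ones. Lift each $\bar t_\alpha$ to $B_i$ and adjoin transcendental variables $T_\alpha$ over $\Frac W_i$. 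We impose the relations $T_\alpha^{p^{e_\alpha}}=(\text{lift of earlier expression})+\pi\,\ep_\alpha$, where $\ep_\alpha$ are \emph{independent error variables} and $\pi$ is the parameter. This keeps everything inside a purely transcendental extension of a field, hence a domain. The remaining generators of $\mathfrak m_i$ are handled the same way: each $x\in\mathfrak m_i$ is assigned a variable $X$, and its relations are imposed only modulo the divided elements $X/g^{a}$. Here $a=2c+1$ with $c\ge n$, following the abstract. The rank-one valuation $v_i$ is the Gauss-type valuation extending the $\pi$-adic one, with all variables of value $0$ and the error variables carrying the perturbation.

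\emph{Gluing.} Take $K$ to be the compositum field of all the variables over $\QQ$. By the approximation theorem for finitely many independent rank-one valuations, choose a single $g\in K$ with $v_i(g)>0$ for all $i$. Also choose lifts of idempotent-type separating elements, so that the $r$ local constructions live inside one domain $D$ with exactly $r$ maximal ideals $\mathfrak p_i=\mathfrak m_{v_i}\cap D$. The map $D\to B$ sends each variable to its prescribed lift in $B_i$ at the $i$th coordinate; this is surjective by the coefficient-ring and triangular presentation choices. Its kernel $I$ contains $g$ because $g$ maps into $\prod\mathfrak m_i$ and the divided elements are arranged to kill $g$.

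\emph{Main obstacle.} The hard step is the normal-form argument. We must show that adjoining the divided elements (needed so that $D/gD$ is perfect, i.e.\ has $T$-nilpotent radical) does not kill any additional ``good'' monomial. Concretely, the induced map $D/gD\to B$ must be well defined with kernel exactly as intended. In characteristic $p^n$, no nonzero $p^j$ with $j<n$ may land in $I$. I would attack this by writing every element of $D$ uniquely as a $W$-combination of reduced monomials in the $T_\alpha$, $X$ and divided elements, with the $g$-adic order as grading. Then I would check that the relations' leading terms involve $g^{a}$ with $a=2c+1>2n$. Any product of two divided elements therefore keeps $g$-order $\ge c\ge n$, so it never produces $p^{j}$ with $j<n$ in the normal form. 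Dimension one follows because every nonzero prime contains a power of $g$ up to units of valuation zero, and semilocality follows from the construction.
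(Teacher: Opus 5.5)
Your outline has the right architecture: coefficient DVRs, a triangular presentation with independent error variables, a common field with independent rank-one valuations, divided elements, and a normal-form check. The gap is that the divided elements, which carry the whole proof, are misspecified, and as written the construction fails.

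If $X/g^{a}\in D$, then $X=g^{a}\cdot(X/g^{a})\in gD\subseteq I$. So the map $D\to B$ cannot send $X$ to a nonzero $x\in\mathfrak m_i$, as you require. You may divide only monomials that already vanish in $B$, and you must divide all their finite products by a growing power. The paper adjoins, for $i$-bad pairs $(r_1,\nu^{(1)}),\ldots,(r_\ell,\nu^{(\ell)})$ (meaning $p^{r_j}b^{\nu^{(j)}}=0$ in $B_i$), the elements
\[
q=s_i^{-(2\ell-1)}p^{\sum r_j}x^{\sum\nu^{(j)}} .
\]
The exponent $2\ell-1$ gives $qq'=s_iq''$, and each bad monomial becomes $s_i$ times a divided element. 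Hence the divided ideal squares to zero modulo $s_i$, and $D_i/\sigma_iD_i$ is a nilpotent extension of the monomial algebra $T_i$. The radical of $T_i$ is $T$-nilpotent by a K\"onig argument transported from $\mathfrak m_i$. Dividing individual relations by a fixed power of $g$ gives no such closure, so nothing controls products of infinitely many divided elements modulo $g$.

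You also put $a=2c+1$ in the wrong place. It is not a division exponent but a ramification index: $p=\tau^{2c+1}$, $\sigma=\tau^{c}$, and $g=s_1\cdots s_r$, where $s_i$ is associated to $\sigma_i$ at the $i$th center and congruent to $1$ at the others. An arbitrary $g$ of positive value does not suffice. This normalization does two things:
\begin{enumerate}[(i)]
\item It makes the divided elements integral, since $v_i(q)\ge(\ell+c)/a>0$, and integrality over the pullback $R=\bigcap_i(C_i+A_i)$ then gives $\dim D=1$.
\item Since $\gcd(a,c)=1$, a term $p^{r}\sigma^{j-2\ell+h}$ equals a pure power $p^{v}$ only if $j-2\ell+h=ak$ and $v=r+ck$. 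Combined with the occurrence count $\ell\le\rho+\max(r-v,0)$, this shows that nothing from $J_i$ yields a good $p^{v}X^{e}$ (one with $p^vb^e\ne0$) with $v<n_e$.
\end{enumerate}

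The danger is not that products of divided elements have large $g$-order. It is that a good $p^{v}X^{e}$ could arise from $\sigma^{j}$ times divided factors, or from coefficients in the truncated ideal $A_i$ with arbitrary rational dependence on the $U_\lambda$. The latter needs the separate estimate $an+(a-2c)(r+\rho)+c(j+h)\ge an$ before any polynomial coefficient is extracted.

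Without the resulting isomorphism $S_i/J_i\cong T_i$ you cannot define $D\to B$ by sending generators to lifts, because $D$ is a subring of a field, not a free algebra. Likewise, $g\in I$ does not follow merely from $g$ mapping into the radical.

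Two smaller points. First, $B_i$ need not be complete, so Cohen theory should be replaced by Newton lifting of nilpotent errors. Second, the triangular generators for $B_i$ must be harmless at the other centers, and your proposal does not say how they behave there. The paper uses $Z^{d}-(1-e_i)Z-F_\alpha$, so the foreign residue equation is $Z^{d}-Z$ with simple root $0$, and the foreign $z_\alpha$ vanish in the local quotient.
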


	We sketch the argument before giving details. In positive residue
	characteristic $p$, a local perfect ring of characteristic $p^n$ admits
	a map from a characteristic-zero DVR with uniformizer $p$. Its residue
	image can be chosen so that the remaining residue extension is purely
	inseparable. We write this extension as a triangular tower. Each lifted
	equation has a radical error. Assigning an independent error variable
	to each equation gives a field extension with a standard monomial
	basis.
	
	Next we put the coefficient objects into a common algebraically closed
	field of characteristic zero, with finitely many independent rank-one
	valuations. Truncated valuation ideals give a one-dimensional semilocal
	coefficient domain. Approximation provides parameters that vanish at
	one chosen center and are congruent to $1$ at all other centers.
	Divided products of the zero monomials make each chosen principal
	quotient a nilpotent extension of a perfect monomial cover.
	
	The main point is that no unwanted relations appear. In positive residue
	characteristic we take
	\[
	p=\tau^{2c+1},\qquad \sigma=\tau^c,\qquad c\ge n.
	\]
	An occurrence count controls every coefficient that contains a
	truncated-valuation error. For the remaining terms, algebraic
	independence of the power variables separates the constant coefficient.
	Comparing exponents modulo $2c+1$ then protects every nonzero $p^vX^e$.
	This is done in Section~\ref{sec:normal}.
	
	Section~2 contains preliminaries. Sections~3 and 4 construct the
	coefficient objects and their common valued field. Sections~5 and 6
	give the generic equations and the integral domain. The local quotient
	calculation is in Section~7. We finish the proof in Section~8 and
	discuss examples in Section~9.
	
	\section{Perfect rings and principal quotients}
	
We give some basic facts and properties on Perfect rings and their quotients in this section.
	
	\begin{definition}
		An ideal $N$ of a ring $A$ is \emph{$T$-nilpotent} if every sequence
		$a_1,a_2,\ldots$ in $N$ has an initial product $a_1\cdots a_m=0$.
		A domain $D$ is \emph{almost perfect} if every nonzero proper
		quotient $D/I$ is perfect.
	\end{definition}
	
	We use the following form of Bass's theorem
	\cite[Theorem P]{Bass}; see also \cite[Theorem 1.1]{BS03} or \cite[Theorem 3.10.18]{WK}.
	
	\begin{theorem}[Bass]\label{thm:bass}
		For a commutative ring $A$, the following conditions are equivalent:
		\begin{enumerate}[(i)]
			\item $A$ is perfect;
			\item $A$ satisfies the descending chain condition on principal ideals;
			\item $A$ is a finite product of local rings with $T$-nilpotent
			maximal ideals;
			\item $A$ is semilocal and $\Jac(A)$ is $T$-nilpotent.
		\end{enumerate}
		Moreover, a semilocal ring is perfect if and only if all its maximal
		localizations are perfect.
	\end{theorem}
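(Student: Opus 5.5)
The statement just above is Bass's Theorem~P in commutative form, so the plan is to recall the standard chain of implications rather than invent anything new. I will prove (i)$\Rightarrow$(ii)$\Rightarrow$(iv)$\Rightarrow$(iii)$\Rightarrow$(i), citing the module-theoretic definition of perfect (every module has a projective cover) only at the two ends. The final assertion about maximal localizations comes last.

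\emph{(iv)$\Leftrightarrow$(iii).} Suppose $A$ is semilocal and $J=\Jac(A)$ is $T$-nilpotent. Then every element of $J$ is nilpotent: apply $T$-nilpotence to the constant sequence $a,a,\ldots$. So idempotents lift modulo $J$. By the Chinese remainder theorem $A/J\cong\prod_{i=1}^r A/\mathfrak m_i$, and lifting its primitive idempotents gives $A\cong\prod A_i$ with each $A_i$ local. Each maximal ideal $\mathfrak m_iA_i$ is a direct factor of $J$, so it is $T$-nilpotent. Conversely, a finite product of local rings with $T$-nilpotent maximal ideals is semilocal, and its Jacobson radical is the product of those ideals. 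Choosing a sequence in the product, the initial products vanish in every coordinate once they vanish in each coordinate separately; take the maximum of the finitely many indices. This step is routine.

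\emph{(ii)$\Rightarrow$(iv).} With DCC on principal ideals, first show that $J$ is $T$-nilpotent. Given $a_1,a_2,\ldots\in J$, the chain $a_1A\supseteq a_1a_2A\supseteq\cdots$ stabilizes. So $a_1\cdots a_m=a_1\cdots a_m a_{m+1}x$, hence $a_1\cdots a_m(1-a_{m+1}x)=0$, and $1-a_{m+1}x$ is a unit, giving $a_1\cdots a_m=0$. For semilocality, note that $A/J$ inherits DCC on principal ideals and is reduced. A reduced commutative ring with DCC on principal ideals is von Neumann regular: from $aA=a^2A$ we get $a=a^2x$, so $ax$ is an idempotent. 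An infinite set of maximal ideals would then give an infinite strictly descending chain $e_1A\supsetneq e_1e_2A\supsetneq\cdots$ of idempotent-generated principal ideals. This finiteness argument is the step needing the most care. I expect to handle it by showing that $A/J$ is a regular ring with no infinite set of orthogonal idempotents, hence a finite product of fields.

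\emph{(iii)$\Rightarrow$(i)$\Rightarrow$(ii).} For local $(A,\mathfrak m)$ with $\mathfrak m$ $T$-nilpotent, $\mathfrak mM\neq M$ for every nonzero module $M$. Indeed, if $x\in M$ is nonzero and $\mathfrak mM=M$, one builds $x=a_1x_1$, $x_1=a_2x_2,\ldots$, which contradicts $T$-nilpotence. So $\mathfrak m$ is right $T$-nilpotent in Bass's sense, $A/\mathfrak m$ is a field, and Bass's construction of projective covers as free covers of $M/\mathfrak mM$ applies. For the converse (i)$\Rightarrow$(ii), I will simply invoke \cite[Theorem P]{Bass}: the existence of projective covers forces DCC on principal ideals via the Bass module $\varinjlim(A\xrightarrow{a_1}A\xrightarrow{a_2}\cdots)$. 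This is the deepest direction, and I will cite it rather than reprove it.

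Finally, let $A$ be semilocal with maximal ideals $\mathfrak m_1,\ldots,\mathfrak m_r$. If $A$ is perfect, then by (iii) $A=\prod A_i$, and each $A_{\mathfrak m_i}\cong A_i$ is perfect. Conversely, suppose each $A_{\mathfrak m_i}$ is perfect, i.e.\ its maximal ideal is $T$-nilpotent. Take a sequence $(a_k)$ in $J$. For each $i$ the images satisfy $a_1\cdots a_{m_i}=0$ in $A_{\mathfrak m_i}$. With $m=\max_i m_i$, the element $a_1\cdots a_m$ is locally zero at every maximal ideal, hence zero. So $J$ is $T$-nilpotent, and (iv) holds.
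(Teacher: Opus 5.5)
Your argument is correct but takes a different route from the paper, because the paper gives no proof of this theorem. It only cites \cite[Theorem P]{Bass}, \cite[Theorem 1.1]{BS03} and \cite[Theorem 3.10.18]{WK}, and it asserts the clause about maximal localizations without argument.

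You prove (ii)$\Rightarrow$(iv)$\Rightarrow$(iii)$\Rightarrow$(i), the converse (iii)$\Rightarrow$(iv), and the localization clause by elementary ring theory. You keep the citation only for (i)$\Rightarrow$(ii), the one direction that needs Bass's module-theoretic machinery. This makes the dependencies visible: everything the paper later uses rests on your elementary implications plus that single cited direction. That includes the Bass decomposition of $B$, both statements of Lemma~\ref{lem:lifting}, and the local-to-global step at the end of the proof of Theorem~\ref{thm:main}. The paper's citation is simply shorter.

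When you write it out, tighten a few routine points:
\begin{enumerate}[(a)]
\item DCC gives $a^nA=a^{n+1}A$, not $aA=a^2A$. From $a^n(1-ax)=0$ you get $(a(1-ax))^n=0$, and reducedness of $A/\Jac(A)$ then gives $a=a^2x$.
\item In showing $\mathfrak mM\ne M$, the equality $\mathfrak mM=M$ only writes each element as a finite sum. At each stage you must choose one term $a_kx_k$ with $a_1\cdots a_kx_k\ne0$, not write $x_{k-1}=a_kx_k$ directly.
\item For (iii)$\Rightarrow$(i), note that modules and projective covers over a finite product split coordinatewise, so the local case suffices.
\item For semilocality, use the direct finish. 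A minimal nonzero principal ideal $e\overline A$ of the regular ring $\overline A=A/\Jac(A)$ is a field with identity $e$. Split $\overline A=e\overline A\times(1-e)\overline A$ and repeat. DCC then stops the strictly descending chain $(1-e_1)\overline A\supsetneq(1-e_1-e_2)\overline A\supsetneq\cdots$, so $\overline A$ is a finite product of fields.
\end{enumerate}
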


	\begin{lemma}\label{lem:lifting}
		Quotients of perfect rings are perfect. If $N$ is a nilpotent ideal
		of $A$ and $A/N$ is perfect, then $A$ is perfect.
	\end{lemma}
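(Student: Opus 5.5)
We prove the two assertions separately, using the characterization (ii) of Theorem~\ref{thm:bass}, the descending chain condition on principal ideals. This is the condition best suited to passing to quotients and to nilpotent extensions. Condition (iv) would serve equally well as a check, since both semilocality and $T$-nilpotence of the radical behave well under these operations.

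\emph{Quotients.} Let $A$ be perfect and $J$ an ideal of $A$, with $\pi\colon A\to A/J$ the projection. Take a descending chain of principal ideals $\bar a_1(A/J)\supseteq \bar a_2(A/J)\supseteq\cdots$ in $A/J$. We cannot simply pull this back, since the preimages $a_iA+J$ are not principal. Instead we build a chain in $A$ directly. The inclusion $\bar a_{i+1}\in \bar a_i(A/J)$ gives $a_{i+1}=a_i x_i + j_i$ with $j_i\in J$. A cleaner route uses (iii): $A\cong\prod_k A_k$ with each $A_k$ local with $T$-nilpotent maximal ideal $\mathfrak m_k$. Then $J=\prod_k J_k$ and $A/J\cong\prod_k A_k/J_k$. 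After discarding the factors with $J_k=A_k$, each $A_k/J_k$ is local with maximal ideal $\mathfrak m_k/J_k$. Images of $T$-nilpotent ideals are $T$-nilpotent: given a sequence $\bar a_i$, lift it to $a_i\in\mathfrak m_k$; some initial product $a_1\cdots a_m$ is $0$, so its image is $0$. Hence $A/J$ satisfies (iii), with the empty product, the zero ring, allowed if $J=A$.

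\emph{Nilpotent extensions.} Suppose $N^t=0$ and $A/N$ is perfect. We verify (iv) for $A$. First, $N\subseteq\Jac(A)$, because $1+n$ is a unit for every nilpotent $n$. Hence the maximal ideals of $A$ are in bijection with those of $A/N$, so $A$ is semilocal, and $\Jac(A)/N=\Jac(A/N)$.

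For $T$-nilpotence, let $a_1,a_2,\ldots\in\Jac(A)$. We want an initial product that vanishes. Apply $T$-nilpotence of $\Jac(A/N)$ to the sequence of images to get $m_1$ with $a_1\cdots a_{m_1}\in N$. Apply it again to the tail $a_{m_1+1},a_{m_1+2},\ldots$ to get $m_2>m_1$ with $a_{m_1+1}\cdots a_{m_2}\in N$. Iterating $t$ times, the product $a_1\cdots a_{m_t}$ factors as a product of $t$ elements of $N$, so it lies in $N^t=0$. Thus $\Jac(A)$ is $T$-nilpotent, and $A$ is perfect by (iv).

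The only point requiring care is the iteration on tails in the second part. A single application of $T$-nilpotence in $A/N$ only lands the product in $N$, and it is nilpotency of $N$, not mere $T$-nilpotence, that closes the argument after finitely many blocks. Everything else is a formal consequence of Theorem~\ref{thm:bass}.
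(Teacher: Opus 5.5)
Your proof is correct and essentially the paper's. The nilpotent-extension part is exactly its block-by-block iteration via (iv) using $N^t=0$. For quotients you work with the local decomposition (iii) instead of the paper's one-line appeal to (iv). This is a harmless variant with one small advantage: it makes visible that the maximal ideals of $A/J$ are images of $T$-nilpotent ideals, whereas the (iv) route tacitly uses $\Jac(A/J)=(\Jac(A)+J)/J$, which holds here only because $A$ is semilocal.

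You should, however, delete the opening claim that you use (ii) and the abandoned attempt to lift a chain of principal ideals, since neither part of your argument actually relies on (ii).
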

	\begin{proof}
		Semilocality and $T$-nilpotence of the radical pass to quotients.
		For the second claim, $N\subseteq\Jac(A)$ and $A$ is semilocal.
		Given a sequence in $\Jac(A)$, perfectness modulo $N$ gives a finite
		initial block whose product lies in $N$. Apply the same argument to
		the tail and repeat. If $N^s=0$, the product of $s$ such blocks is
		zero. Then Theorem~\ref{thm:bass} applies.
	\end{proof}
	
	\begin{lemma}\label{lem:extendT}
		If $N$ is a $T$-nilpotent ideal of $A$ and $S$ is a commutative
		$A$-algebra, then $NS$ is $T$-nilpotent.
	\end{lemma}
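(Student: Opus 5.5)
The plan is to reduce to the defining property of $T$-nilpotence and use the fact that every element of $NS$ is a finite sum of products $ns$ with $n\in N$, $s\in S$. Let $x_1,x_2,\ldots$ be a sequence in $NS$. We write each term as
\[
x_i=\sum_{j=1}^{k_i} n_{ij}s_{ij},\qquad n_{ij}\in N,\ s_{ij}\in S .
\]
Expanding a finite product $x_1\cdots x_m$ gives a sum of terms of the form $n_{1j_1}\cdots n_{mj_m}\,s_{1j_1}\cdots s_{mj_m}$, indexed by choice functions $j\colon\{1,\ldots,m\}\to\mathbb N$ with $j(i)\le k_i$. So it suffices to find a single $m$ such that every product $n_{1j_1}\cdots n_{mj_m}$ vanishes in $A$, since then its image in $S$ also vanishes. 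Here commutativity is used: the ring elements can be collected in front.

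To find such an $m$, I will use a König's lemma argument. Consider the tree whose nodes at level $m$ are the choice sequences $(j_1,\ldots,j_m)$ with $j_i\le k_i$ and $n_{1j_1}\cdots n_{mj_m}\neq 0$. Because a zero product stays zero when further factors are appended, this set is closed under taking initial segments, so it really is a tree. Every node has at most $k_{m+1}$ children, so the tree is finitely branching.

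Suppose the tree were infinite. König's lemma would then give an infinite branch $(j_1,j_2,\ldots)$, and the sequence $n_{1j_1},n_{2j_2},\ldots$ in $N$ would have no vanishing initial product. This contradicts the $T$-nilpotence of $N$. Hence the tree is finite, so some level $m$ is empty. For that $m$ every expanded term of $x_1\cdots x_m$ vanishes, and therefore $x_1\cdots x_m=0$.

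The only real obstacle is this passage from the single-sequence property to a uniform $m$ over all finitely many choices at each stage; König's lemma handles it because the branching is finite. Within ZFC the needed form of König's lemma, for finitely branching trees with countably many nodes, is provable: the nodes can be well-ordered explicitly, so no extra choice principle is required. A small care point is that $NS$ denotes the ideal of $S$ generated by the image of $N$, so its elements are exactly the finite sums written above.
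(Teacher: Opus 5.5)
Your proof is correct and is essentially the paper's argument. You expand the product, form the finitely branching tree of coefficient choices whose product in $A$ is nonzero, and use K\"onig's lemma together with $T$-nilpotence of $N$ to show that some level is empty; the paper's phrasing by contradiction (all initial products nonzero forces every level to be nonempty) is only a cosmetic difference.
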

	\begin{proof}
		Assume $f_1f_2\cdots f_m\ne0$ for every $m$, where each $f_j$ is a
		finite sum $\sum_k a_{jk}s_{jk}$ with $a_{jk}\in N$. For each $m$,
		some choice of terms has nonzero product in $S$; its coefficient
		product in $A$ is then nonzero. These finite choices form a finitely
		branching tree, whose vertices are coefficient choices with nonzero
		product in $A$. Every level is nonempty. By K\"onig's lemma there is
		a sequence in $N$ all of whose initial products are nonzero, a
		contradiction.
	\end{proof}
	
	\begin{proposition}\label{prop:principal}
		Let $D$ be a one-dimensional semilocal domain. Suppose that
		$0\ne g\in\Jac(D)$ and $D/gD$ is perfect. Then $D$ is almost perfect.
	\end{proposition}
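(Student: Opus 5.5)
The plan is to show directly that $D/I$ is perfect for every nonzero proper ideal $I$ of $D$, by reducing to the quotients $D/g^sD$. The reduction uses $g\in\Jac(D)$ together with one-dimensionality, and the perfectness of $D/g^sD$ then follows from Lemma~\ref{lem:lifting}.

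\emph{Step 1: each $D/g^sD$ is perfect.} Fix $s\ge1$ and put $A=D/g^sD$ and $N=gD/g^sD$. Then $N^s=0$ and $A/N\cong D/gD$, which is perfect by hypothesis. The second part of Lemma~\ref{lem:lifting} therefore gives that $A$ is perfect.

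\emph{Step 2: some power of $g$ lies in $I$.} Let $I$ be a nonzero proper ideal of $D$. Any prime $P\supseteq I$ is nonzero. Since $D$ is a one-dimensional domain, $(0)$ is the unique height-zero prime, so $P$ must be maximal. Hence the primes containing $I$ are exactly the maximal ideals containing $I$, and
\[
\sqrt I=\bigcap_{\mathfrak m\in\Max(D),\ \mathfrak m\supseteq I}\mathfrak m\supseteq \Jac(D)\ni g.
\]
There are finitely many such $\mathfrak m$ because $D$ is semilocal, though this is not needed for the inclusion. It follows that $g^s\in I$ for some $s\ge1$.

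\emph{Step 3: conclusion.} Since $g^sD\subseteq I$, the ring $D/I$ is a quotient of $D/g^sD$. That ring is perfect by Step~1, so $D/I$ is perfect by the first part of Lemma~\ref{lem:lifting}. As $I$ was an arbitrary nonzero proper ideal, $D$ is almost perfect.

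The only point needing care is Step~2. There one must use that every nonzero prime of $D$ is maximal, so that membership of $g$ in all maximal ideals forces $g\in\sqrt I$. This is precisely where the hypotheses of one-dimensionality and $g\in\Jac(D)$ enter; everything else is formal.
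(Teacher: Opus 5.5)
Your proof is correct and is the paper's argument: you lift perfectness from $D/gD$ to each $D/g^sD$ through the nilpotent ideal $gD/g^sD$, and then use that every prime over a nonzero proper ideal is maximal to put a power of $g$ inside it. The only difference is cosmetic: the paper first passes to a nonzero principal ideal $bD\subseteq I$, while you work with $\sqrt I$ directly. For maximality, the cleaner justification is that a nonzero prime strictly inside a maximal ideal would give a chain of length two.
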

	\begin{proof}
		For every $m\ge1$ the ideal $gD/g^mD$ is nilpotent, so
		Lemma~\ref{lem:lifting} shows that $D/g^mD$ is perfect. Let $b$ be a
		nonzero nonunit of $D$. Every prime containing $bD$ is nonzero and
		hence maximal. Since $g$ lies in every maximal ideal, $g\in\sqrt{bD}$.
		Thus $g^mD\subseteq bD$ for some $m$, and $D/bD$ is a quotient of
		$D/g^mD$. Finally, every nonzero proper ideal contains a principal
		ideal of this form.
	\end{proof}
	
	In the local case, Proposition~\ref{prop:principal} is the usual
	one-principal-quotient criterion
	\cite[Proposition 2.5]{BS03}. The proof above gives the semilocal form
	we need.
	
	\begin{lemma}\label{lem:finiteNewton}
		Let $(A,\mathfrak m)$ be local, let $f\in A[X]$, and suppose
		$f(x)\in\mathfrak m$ is nilpotent and $f'(x)$ is a unit. Then there
		is $y\in A$ with $f(y)=0$ and $y-x\in\mathfrak m$.
	\end{lemma}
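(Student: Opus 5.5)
We prove Lemma~\ref{lem:finiteNewton} by Newton iteration: each step squares the ideal generated by the error, so nilpotence stops the process after finitely many steps. Put $N=f(x)A$. This is a nilpotent ideal, say $N^{2^k}=0$, and it lies in $\mathfrak m$. We will build $x_0=x,x_1,\dots,x_k$ satisfying three conditions at each stage $i$:
\[
x_i-x\in N,\qquad f'(x_i)\ \text{is a unit},\qquad f(x_i)\in N^{2^i}.
\]
Then $y=x_k$ gives $f(y)\in N^{2^k}=0$ and $y-x\in N\subseteq\mathfrak m$.

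The inductive step is the usual one. Set
\[
x_{i+1}=x_i-f(x_i)\,f'(x_i)^{-1}.
\]
Since $f(x_i)\in N^{2^i}\subseteq N$, we get $x_{i+1}-x\in N$.

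To control $f(x_{i+1})$, use the polynomial Taylor identity in $A[X,H]$:
\[
f(X+H)=f(X)+f'(X)H+H^2\,g(X,H)
\]
for some polynomial $g$. Substitute $X=x_i$ and $H=-f(x_i)f'(x_i)^{-1}$. The first two terms cancel, so
\[
f(x_{i+1})=f(x_i)^2\,u
\]
with $u\in A$. Hence $f(x_{i+1})\in N^{2^{i+1}}$.

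It remains to show $f'(x_{i+1})$ is a unit. Write
\[
f'(x_{i+1})=f'(x)+(x_{i+1}-x)h
\]
for some $h\in A$. The correction term lies in $N\subseteq\mathfrak m$, and $f'(x)$ is a unit, so $f'(x_{i+1})$ is a unit in the local ring $A$. This completes the induction.

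Nothing in the argument is delicate. The two points to check are that the Taylor identity holds over an arbitrary commutative ring (it does, because $f(X+H)-f(X)-f'(X)H$ is divisible by $H^2$ in $A[X,H]$), and that the unit condition is propagated through each step. The nilpotence of $f(x)$ replaces completeness: the sequence becomes constant after $k$ steps.
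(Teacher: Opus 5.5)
Your proof is correct and follows the paper's argument: the Newton iteration $x_{j+1}=x_j-f(x_j)f'(x_j)^{-1}$, where the Taylor identity gives $f(x_{j+1})\in f(x_j)^2A$, the congruence of each $x_j$ with $x$ keeps $f'(x_j)$ a unit, and the nilpotence of $f(x)$ stops the process after finitely many steps. Your version just spells out the induction more explicitly, and it yields the slightly stronger conclusion $y-x\in f(x)A$.
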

	\begin{proof}
		Set $x_0=x$ and $x_{j+1}=x_j-f(x_j)/f'(x_j)$. All $x_j$ have the same
		residue, so the denominators stay units. Taylor expansion gives
		$f(x_{j+1})\in f(x_j)^2A$. Hence $f(x_j)\in f(x_0)^{2^j}A$, which is
		zero for large $j$.
	\end{proof}
	
	\section{Coefficient objects and presentations}
	
	\begin{proposition}\label{prop:coefficients}
		Let $(B,\mathfrak m,k)$ be local perfect.
		\begin{enumerate}[(a)]
			\item If $\operatorname{char}k=0$, then $B$ contains a coefficient
			field isomorphic to $k$.
			\item If $\operatorname{char}k=p>0$, there are $n\ge1$, a
			characteristic-zero DVR $C$ with uniformizer $p$, and a local map
			$\eta:C\to B$ such that
			\[
			\operatorname{char}B=p^n,\qquad \ker\eta=p^nC,
			\]
			and $k/k_0$ is algebraic purely inseparable, where
			$k_0=C/pC$ is identified with its residue image in $k$.
		\end{enumerate}
		In both cases the fraction field of the coefficient object is algebraic
		over $\QQ(T)$ for some set $T$ of independent elements.
	\end{proposition}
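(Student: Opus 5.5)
The plan is to imitate the Cohen structure theory for complete local rings. The one input we need is that the maximal ideal $\mathfrak m$ of a local perfect ring is $T$-nilpotent (Theorem~\ref{thm:bass}), so Henselian-type lifting holds for nilpotent errors (Lemma~\ref{lem:finiteNewton}).

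\emph{Case (a).} Here $\operatorname{char}k=0$. Every nonzero integer is a unit modulo $\mathfrak m$, hence a unit in $B$, so $\QQ\subseteq B$. Choose a transcendence basis $T$ of $k/\QQ$ and lifts $\tilde t\in B$. Every nonzero polynomial in the $\tilde t$ has nonzero residue, so it is a unit, and we get $\QQ(T)\hookrightarrow B$. Next, use Zorn's lemma on subfields $L\subseteq B$ containing $\QQ(T)$ that map injectively to $k$; a maximal one exists because unions of chains are again such subfields. Suppose the image $\bar L$ of a maximal $L$ were properly contained in $k$, and pick $\alpha\in k\setminus \bar L$. Then $\alpha$ is separable algebraic over $\bar L$, with minimal polynomial $\bar f$. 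Lift $\bar f$ to $f\in L[X]$ and take any lift $x$ of $\alpha$. Then $f(x)\in\mathfrak m$ and $f'(x)$ is a unit. The difficulty is that $f(x)$ need not be nilpotent. To handle this, run the Newton iteration of Lemma~\ref{lem:finiteNewton} with errors $e_{j+1}\in e_j^2B$. Because $\mathfrak m$ is $T$-nilpotent, the sequence $e_0,e_0,e_1/e_0^{2}\cdots$ must eventually give zero: writing $e_j=e_0^{2^j}u_j$, the product $e_0\cdot e_0\cdots$ of $2^j$ factors vanishes for large $j$, since a constant sequence in a $T$-nilpotent ideal is nilpotent. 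So we get a root $y$, and $L[y]\cong L[X]/(f)$ is a field mapping isomorphically onto $\bar L(\alpha)$, which contradicts maximality. Hence $L\cong k$, and $L$ is algebraic over $\QQ(T)$.

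\emph{Case (b).} Here $\operatorname{char}k=p$. First, $p\in\mathfrak m$, and $\mathfrak m$ is $T$-nilpotent, so $p$ is nilpotent; take $n$ minimal with $p^n=0$, so $\operatorname{char}B=p^n$. Choose a $p$-basis, or simply a separating transcendence basis $T$ of the perfect-closure-free part: pick $T\subseteq k$ algebraically independent over $\FF_p$ and maximal with $k/\FF_p(T)$ separable-then-purely-inseparable. Concretely, let $T$ be a transcendence basis, let $k_s$ be the separable closure of $\FF_p(T)$ in $k$, and note that $k/k_s$ is purely inseparable. Start from $C_0=\ZZ[T]_{(p)}$, a DVR with uniformizer $p$ and residue field $\FF_p(T)$. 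Map $T$ to arbitrary lifts in $B$; elements outside $pC_0$ go to units, so we get a local map $C_0\to B$. Then enlarge $C_0$ by Zorn's lemma over unramified DVR extensions $C\supseteq C_0$, all of them algebraic, with residue field inside $k_s$ and with a compatible local map to $B$. At a maximal stage, any separable $\alpha\in k_s$ not yet in the residue field has a monic separable minimal polynomial. Lift it to $C$, so that $C[X]/(f)$ is an unramified DVR extension, and obtain a root in $B$ by the $T$-nilpotent Newton argument above. This extends the map and shows that the residue field of a maximal $C$ is all of $k_s$. Unions of chains of DVRs with uniformizer $p$ are again DVRs, since $p$ remains a uniformizer and every element is a unit times a power of $p$. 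Finally, the kernel of $\eta$ is a prime-power ideal $p^mC$ with $p^m\mapsto p^m$, so $\ker\eta=p^nC$ by minimality of $n$. $\Frac(C)$ is algebraic over $\QQ(T)$ by construction.

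I expect the main obstacle to be the Newton step without nilpotence of $f(x)$. In a perfect local ring, $\mathfrak m$ is only $T$-nilpotent, so we must show that the iteration terminates using $T$-nilpotence of the sequence $(e_0,e_0,\dots)$, together with the fact that the $e_j$ lie in $e_0^{2^j}B$. The saving point is that each single element of a $T$-nilpotent ideal is nilpotent. So $f(x)$ is in fact nilpotent, and Lemma~\ref{lem:finiteNewton} applies directly.
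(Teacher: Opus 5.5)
Your proof is correct and follows the paper's argument. Elements of the $T$-nilpotent ideal $\mathfrak m$ are nilpotent, so Lemma~\ref{lem:finiteNewton} applies directly, which is exactly the paper's first line and makes your worry about a non-nilpotent $f(x)$ moot. Case (a) uses a Zorn-maximal lifted field over $\QQ(T)$. Case (b) enlarges $\ZZ[T]_{(p)}$ by monic unramified lifts until the residue field is the separable closure of $\FF_p(T)$ in $k$, and gets $\ker\eta=p^nC$ from minimality of $n$. The only cosmetic difference is in (b): you use Zorn's lemma, which needs the candidate DVRs placed inside a fixed algebraic closure of $\QQ(T)$ so that they form a set, whereas the paper runs a transfinite recursion along a well-ordered generating set of $k_0$.
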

	\begin{proof}
		Every element of $\mathfrak m$ is nilpotent. If $k$ has characteristic
		zero, every nonzero integer has nonzero residue and is a unit, so
		$\QQ\subseteq B$. Lift a transcendence basis $T$ of $k/\QQ$ to $B$.
		These lifts are algebraically independent, and every nonzero polynomial
		in them is a unit. Hence they generate an embedded field $\QQ(T)$.
		The extension $k/\QQ(T)$ is separable algebraic. By Zorn's lemma a
		maximal field lift exists. If its residue field is not $k$, lift the
		minimal polynomial of a missing separable element and apply
		Lemma~\ref{lem:finiteNewton}. Its root generates a larger embedded
		field, a contradiction. This proves (a).
		
		Suppose $k$ has characteristic $p$. The element $p1_B$ is nilpotent,
		so $B$ has characteristic exactly $p^n$ for some $n\ge1$. Choose a
		transcendence basis $T$ of $k/\FF_p$ and lift it to $B$. Evaluation
		gives a local map
		\[
		C_0=\ZZ[T]_{(p)}\longrightarrow B.
		\]
		Here we localize at the prime generated by $p$, even when $T$ is
		infinite. Every nonzero element of $C_0$ is $p^ru$ with $u$ a unit, so
		$C_0$ is a DVR with residue field $\FF_p(T)$.
		
		Let $k_0$ be the maximal separable subextension of
		$k/\FF_p(T)$. Every element of $k$ has a $p$-power that is separable
		over $\FF_p(T)$; this follows by removing powers of $X^p$ from its
		irreducible polynomial. Hence $k/k_0$ is purely inseparable.
		
		Starting from $C_0$, we lift the separable algebraic extension $k_0$.
		At a successor step, suppose $C'$ already maps locally to $B$, and let
		$\alpha$ be a separable algebraic residue element not yet lifted. Lift
		its monic irreducible polynomial to $f\in C'[X]$. Any lift $x\in B$
		has nilpotent error $f(x)$ and unit derivative.
		Lemma~\ref{lem:finiteNewton} gives a root $y\in B$ with the required
		residue. The ring $C'[X]/(f)$ is a DVR with uniformizer $p$: it is a
		finite domain over $C'$, and its maximal ideal is generated by $p$.
		Indeed, irreducibility modulo $p$ implies irreducibility over the
		fraction field, and the quotient modulo $p$ is the prescribed residue
		field. The map sending $X$ to $y$ is local.
		
		At a limit step take the union. Every nonzero element of the union is
		still a power of $p$ times a unit, so the union is a DVR with the same
		uniformizer. Well-order a generating set of $k_0$ and repeat the step
		to finish the construction. The kernel of the map to $B$ is exactly
		$p^nC$, since units map to units and $p^r1_B\ne0$ for $r<n$. All
		successive fraction-field extensions are algebraic over the initial
		$\QQ(T)$, which also proves the last assertion.
	\end{proof}
	
	This proposition is a coefficient construction over a nil local
	extension. It uses only finite correction of individual errors and no
	completeness of $B$. For comparison, the usual Cohen structure theorem
	for complete local rings is treated in \cite{Matsumura,Stacks}.
	
	\begin{lemma}\label{lem:puretower}
		If $k/k_0$ is algebraic purely inseparable, it has a well-ordered
		generating family $(\xi_\alpha)$ with relations
		\[
		\xi_\alpha^{d_\alpha}=f_\alpha(\xi_\beta\mid\beta<\alpha),
		\qquad d_\alpha=p^{e_\alpha}\ge2,
		\]
		where each $f_\alpha$ is a polynomial over $k_0$ involving finitely
		many earlier generators. The monomials $\prod\xi_\alpha^{h_\alpha}$,
		$0\le h_\alpha<d_\alpha$, form a $k_0$-basis of $k$.
	\end{lemma}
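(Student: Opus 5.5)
We will build the family by transfinite recursion using Zorn-style well-ordering of $k$. Fix a well-ordering of $k$ (ZFC). We construct an increasing chain of intermediate fields $k_0=K_0\subseteq K_1\subseteq\cdots\subseteq K_\alpha\subseteq\cdots$, with $K_\lambda=\bigcup_{\alpha<\lambda}K_\alpha$ at limit ordinals. At a successor step, if $K_\alpha\ne k$, let $x$ be the least element of $k\setminus K_\alpha$ in the well-ordering. Since $k/k_0$ is purely inseparable, so is $k/K_\alpha$. Hence the minimal polynomial of $x$ over $K_\alpha$ is $X^{d}-c$ with $d=p^e\ge2$ and $c\in K_\alpha$. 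This is the standard fact: the minimal polynomial of a purely inseparable element has the form $X^{p^e}-c$, where $p^e$ is the least power with $x^{p^e}\in K_\alpha$. Set $\xi_\alpha=x$, $d_\alpha=d$, and $K_{\alpha+1}=K_\alpha(\xi_\alpha)$. The process stops at some ordinal once $K_\alpha=k$, because each step adjoins a new element of $k$ and the elements chosen are distinct.

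Next we express $c$ polynomially. By induction on $\alpha$ we claim that $K_\alpha=k_0[\xi_\beta\mid\beta<\alpha]$ as a ring, with no denominators needed. This holds since each $\xi_\beta$ is algebraic over $K_\beta$, so $K_\beta[\xi_\beta]$ is already a field; at limits the union of rings is the union of fields. Thus $c=\xi_\alpha^{d_\alpha}$ is a polynomial $f_\alpha$ over $k_0$ in the earlier generators. Since any element of such a ring involves only finitely many generators, $f_\alpha$ uses finitely many $\xi_\beta$.

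For the basis claim we again induct on $\alpha$. We show that the monomials $\prod_{\beta<\alpha}\xi_\beta^{h_\beta}$ with $0\le h_\beta<d_\beta$, each having finite support, form a $k_0$-basis of $K_\alpha$.

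At a successor step, $[K_{\alpha+1}:K_\alpha]=d_\alpha$, and $1,\xi_\alpha,\dots,\xi_\alpha^{d_\alpha-1}$ is a $K_\alpha$-basis of $K_{\alpha+1}$. Multiplying this basis by the $k_0$-basis of $K_\alpha$ gives the tower basis, which is exactly the set of monomials with support in $\beta\le\alpha$.

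At a limit step, spanning follows from the union. Independence also follows: a linear relation involves finitely many monomials, whose supports lie below some $\alpha'<\lambda$, so the relation already contradicts the basis property for $K_{\alpha'}$.

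The only delicate point is the form $X^{p^e}-c$ of minimal polynomials of purely inseparable elements over an intermediate field. We take this as standard. It can be derived by writing the minimal polynomial as $g(X^{p^e})$ with $g$ separable: pure inseparability forces $g$ to have a single root, hence to be linear.

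The rest is routine transfinite bookkeeping. One should note that the ordinal index is not the enumeration of $k$; it is the well-ordered set of chosen generators.
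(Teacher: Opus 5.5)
Your proposal is correct and follows essentially the same route as the paper's proof. The paper well-orders a generating set and discards redundant elements, while you pick the least element of $k$ not yet reached. Otherwise both arguments use the minimal polynomial $X^{p^e}-c$ at each step, the observation that algebraic adjunctions need no denominators (so $c$ is a polynomial in finitely many earlier generators), and the tower-law basis at successors with unions at limit stages.
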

	\begin{proof}
		Well-order a generating set and omit elements already in the field
		generated by their predecessors. At each step take the smallest
		$p$-power lying in the preceding field. Its degree is that $p$-power.
		A field generated by algebraic elements over a field is already
		generated by them as an algebra: inverses lie in the finite
		subextensions in which they occur. Thus the coefficient on the right is
		a polynomial in finitely many earlier generators. The basis assertion
		follows successively from the extension degrees and then by taking
		unions.
	\end{proof}
	
	From now on fix a Bass decomposition $B=\prod_{i=1}^rB_i$.
	For positive residue characteristic let $C_i,k_{i,0},n_i$ be as in
	Proposition~\ref{prop:coefficients}; for zero residue characteristic
	put $C_i=k_{i,0}=k_i$. Let $\Omega_i$ index the tower in
	Lemma~\ref{lem:puretower} when the residue characteristic is
	positive, and let $\Omega_i$ be empty otherwise. A finite
	concatenation makes $\Omega=\coprod_i\Omega_i$ a well-ordered set
	respecting each tower.
	
	\section{A common valued coefficient domain}\label{sec:base}
	
	We give the valued-field construction explicitly, since the local
	coefficient fields may have different cardinalities and residue
	characteristics.
	
	\begin{lemma}\label{lem:commonfield}
		There are an algebraically closed characteristic-zero field $F$,
		embeddings of all $\Frac(C_i)$ into $F$, and distinct rank-one
		valuations $v_i$ on $F$ such that $v_i$ restricts to the given DVR
		valuation on $C_i$ in positive residue characteristic and is trivial
		on $C_i$ in zero residue characteristic. The value groups can be
		identified with $\QQ$; in the positive case $v_i(p_i)=1$.
	\end{lemma}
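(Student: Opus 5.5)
We build one big field abstractly, embed each $\Frac(C_i)$ into it independently, and then construct the valuations $v_i$ one index at a time. By Proposition~\ref{prop:coefficients}, each $K_i=\Frac(C_i)$ is algebraic over $\QQ(T_i)$ for a set $T_i$ of independent elements. We choose pairwise disjoint sets of indeterminates $T_i'$ in bijection with $T_i$. We also add one extra independent transcendental $t_i$ for each $i$ with zero residue characteristic; these are used below to separate the valuations. Put
\[
F_0=\QQ\Bigl(\textstyle\bigcup_i T_i'\cup\{t_i\}\Bigr),
\]
and let $F$ be an algebraic closure of $F_0$. Since $K_i$ is algebraic over $\QQ(T_i)\cong\QQ(T_i')\subseteq F$, the field $K_i$ embeds into $F$ over this isomorphism, because $F$ is algebraically closed. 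So all the embeddings exist, with images that are algebraic over algebraically independent subfields.

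Next we build $v_i$ on $F_0$. For a positive residue characteristic index $i$, the DVR valuation of $C_i$ restricted to $\QQ(T_i')$ is the Gauss valuation with $v(p)=1$. Indeed, $C_0=\ZZ[T]_{(p)}\subseteq C_i$, and this restriction is the $p$-adic Gauss extension. On $\QQ(T_i')$ we take this valuation and extend it to $F_0$ as the Gauss valuation in the remaining variables $\bigcup_{j\ne i}T_j'\cup\{t_j\}$. This is a rank-one valuation with value group $\ZZ$. For a zero residue characteristic index $i$, we take $v_i$ trivial on $\QQ(\bigcup_j T_j')$ and equal to the $t_i$-adic valuation in $t_i$, with Gauss extension in the other $t_j$. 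This is again rank one and is trivial on the image of $K_i$, because that image is algebraic over $\QQ(T_i')$, on which $v_i$ is trivial. Extensions of valuations to algebraic extensions are algebraic over the value group, so their value groups lie in $\QQ\otimes\ZZ$. After passing to the algebraically closed $F$, the value group is divisible, hence equal to $\QQ$.

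The step that needs care is making the extension of $v_i$ from $F_0$ to $F$ restrict to the \emph{given} DVR valuation on $K_i$, not merely to something extending the Gauss valuation on $\QQ(T_i')$. The valuation ring $C_i$ is a DVR of $K_i$ lying over the Gauss valuation of $\QQ(T_i')$. Here we use that the field compositum $K_i F_0$ is $K_i\otimes_{\QQ(T_i')}$ (purely transcendental extension), because $F_0$ is purely transcendental over $\QQ(T_i')$ in variables independent of $K_i$. On $K_i(\text{other variables})$ we take the Gauss extension of $v_{C_i}$, which restricts correctly to both $K_i$ and $F_0$. We then extend this valuation to the algebraic extension $F$ by Chevalley's extension theorem. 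Its value group is still contained in $\QQ$, and the normalization $v_i(p)=1$ is kept because $v_{C_i}(p)=1$.

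Finally we check distinctness. For $i\ne j$, the valuations $v_i$ and $v_j$ differ on $F_0$. If both indices have positive residue characteristic, then an element $s\in T_i'$ has $v_i(s)=0$ with nonzero residue, since the residue is transcendental. Meanwhile $v_j(p_i)$ and $v_i(p_i)$ can be compared: when the characteristics differ, $v_j(p_i)=0$ while $v_i(p_i)=1$. When $p_i=p_j$, we compare the elements $1+p\,s$ for $s\in T_i'$ instead, which does not decide the question directly. To make this robust, we add for \emph{every} $i$ an extra transcendental $t_i$ and modify $v_i$ on $F_0$ so that $v_i(t_i)=1$ while $v_j(t_i)=0$ for $j\ne i$. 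For positive residue characteristic indices we do this by using the Gauss valuation composed with the evaluation $t_i\mapsto p$, i.e. a valuation with $v_i(t_i-p)>0$; any choice making $v_i(t_i)>0$ works. Then $v_i(t_i)>0=v_j(t_i)$ separates all of them, and the construction of each $v_i$ is otherwise unaffected, since $t_i$ is independent of $K_i$.
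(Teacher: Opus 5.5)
Your proof is correct and is essentially the paper's argument: disjoint transcendence bases, an algebraic closure $F$, the Gauss extension of the DVR valuation of $C_i$ over the purely transcendental compositum, an auxiliary transcendental to separate the valuations, and extension to $F$ with divisible value group $\QQ$. The only difference is the separating device: you use a separate variable $t_i$ for each index, so distinctness follows at once from $v_i(t_i)>0=v_j(t_i)$, whereas the paper uses one variable $Z$ with centers $Z-a_i$ (distinct modulo $p^M$) and $Z-b_i$.

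One point needs fixing: your claim that any choice with $v_i(t_i)>0$ works is too broad, because a composite with the place $t_i\mapsto p$ has rank two. State the positive-characteristic choice explicitly as the rank-one extension $v_i\bigl(\sum_k c_kt_i^k\bigr)=\min_k\bigl(v_i(c_k)+k\bigr)$, as the paper does with value $M$ on $Z-a_i$.
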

	\begin{proof}
		Write $F_i=\Frac(C_i)$, interpreted as $C_i$ in the field case. By
		Proposition~\ref{prop:coefficients}, $F_i$ is algebraic over
		$\QQ(T_i)$. Make the sets $T_i$ disjoint and independent, add one
		variable $Z$, and take an algebraic closure of
		$\QQ(\coprod_iT_i,Z)$. Embed each $F_i$ over its own rational field.
		
		For a positive-characteristic index, start with the DVR valuation on
		$F_i$, extend it by the Gauss rule on the other independent $T_j$,
		and use a Gauss valuation in $Z-a_i$ with a fixed positive integer $M$
		as value. For all indices with the same prime $p$, choose distinct
		integers $a_i$ modulo $p^M$, with $M$ large enough. The restrictions
		to $\QQ(Z)$ are distinct: $v_i(Z-a_i)=M$, while $v_j(Z-a_i)<M$ for
		another center with the same prime. Different primes are distinguished
		by their values on integers. In the field case use a $(Z-b_i)$-adic
		valuation, trivial on $F_i$ and on the other independent variables,
		with distinct rational $b_i$. These valuations are distinct from the
		positive-residue-characteristic ones and from each other.
		
		Extend each valuation to the algebraic closure $F$. An algebraic
		extension of a discrete rank-one valuation has rational value group;
		in the algebraic closure this group is divisible, hence is $\QQ$ with
		the stated normalization. Distinct restrictions stay distinct after
		extension. These rank-one valuations are independent. We use the usual
		extension and approximation theorems for valuations; see
		\cite[Chapter VI]{ZS} and \cite[Section 1]{Vaquie}.
	\end{proof}
	
	For a positive index choose integers
	\begin{equation}\label{eq:ac}
		c_i\ge n_i,\qquad a_i=2c_i+1,
		\qquad \tau_i^{a_i}=p_i,\qquad \sigma_i=\tau_i^{c_i}
	\end{equation}
	in $F$. For a zero-characteristic index choose $\sigma_i\in F$
	with $v_i(\sigma_i)=1$. Define
	\[
	H_i=\begin{cases}n_i,&p_i>0,\\2,&p_i=0,\end{cases}
	\qquad A_i^*=\{z\in F:v_i(z)\ge H_i\}.
	\]
	Here $p_i=0$ is a label for the zero-residue-characteristic case.
	Let $V_i^*$ be the valuation ring and put
	\[
	V^*=\bigcap_iV_i^*,\qquad
	R^*=\bigcap_i(C_i+A_i^*),\qquad
	A^*=\bigcap_i A_i^*.
	\]
	The ideals $A_i^*$ are understood as ideals of $V_i^*$.
	
	\begin{lemma}\label{lem:pullback}
		The ring $R^*$ is a one-dimensional semilocal domain with maximal
		ideals $\mathfrak p_1^*,\ldots,\mathfrak p_r^*$. Its localizations
		and coefficient quotient are
		\begin{align}
			R^*_{\mathfrak p_i^*}&=C_i+A_i^*,\label{eq:localbase}\\
			R^*/A^*&\cong\prod_i\overline C_i,\label{eq:basequot}
		\end{align}
		where $\overline C_i=C_i/p_i^{n_i}C_i$ for $p_i>0$ and
		$\overline C_i=C_i$ for $p_i=0$.
	\end{lemma}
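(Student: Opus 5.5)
The argument treats $R^*$ as a pullback of the semilocal Prüfer domain $V^*$ along the surjection $V^*\to V^*/A^*$. The first step is to establish the facts about $V^*$ that the rest relies on. Since the $v_i$ are distinct rank-one valuations of $F$, they are independent, so $V^*=\bigcap_iV_i^*$ is a semilocal Prüfer domain. Its maximal ideals $\mathfrak n_i=\mathfrak m_{V_i^*}\cap V^*$ are pairwise distinct, $V^*_{\mathfrak n_i}=V_i^*$, and $V^*$ has dimension one. This is the standard structure theorem for finite intersections of independent valuation rings \cite[Chapter VI]{ZS}. The approximation theorem then gives the Chinese-remainder statement $V^*/A^*\cong\prod_iV_i^*/A_i^*$: the map is surjective because one can prescribe elements of $V^*$ up to $v_i$-value $H_i$ at every $i$ simultaneously.

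Next we identify $R^*$ inside $V^*$. Each $C_i$ lies in $V_i^*$, because $v_i$ is nonnegative on $C_i$. Hence $R^*\subseteq V^*$, and $R^*$ is the preimage of $\prod_i\overline C_i'$ under $V^*\to\prod_iV_i^*/A_i^*$, where $\overline C_i'$ is the image of $C_i$ in $V_i^*/A_i^*$.

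We compute this image. In the positive case, $A_i^*\cap C_i=\{z\in C_i:v_i(z)\ge n_i\}=p_i^{n_i}C_i$, since $v_i$ restricts to the DVR valuation with $v_i(p_i)=1$. So $\overline C_i'\cong C_i/p_i^{n_i}C_i$. In the zero case, $v_i$ is trivial on $C_i$, so $C_i\cap A_i^*=0$ and $\overline C_i'\cong C_i$. Because the map is surjective and $A^*=\ker(V^*\to\prod V_i^*/A_i^*)$ lies in $R^*$, this yields \eqref{eq:basequot}: $R^*/A^*\cong\prod_i\overline C_i$.

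We then read off the maximal ideals. Each $\overline C_i$ is local: either $C_i/p^{n_i}C_i$ with maximal ideal $(p)$, or the field $C_i$. Define $\mathfrak p_i^*$ to be the preimage in $R^*$ of the maximal ideal of the $i$th factor together with the other factors. Since $A^*$ lies in the Jacobson radical of $V^*$, and $V^*$ is integral over $R^*$, every maximal ideal of $R^*$ contains $A^*$. Integrality holds because $V^*/A^*$ is integral over $R^*/A^*$ (each $V_i^*/A_i^*$ is integral over $\overline C_i'$ ... \emph{this is a point needing care}; see below). Alternatively, and more simply, every maximal ideal of $R^*$ contains $A^*$ because $1+A^*\subseteq(V^*)^\times\cap R^*$, and inverses of such units lie in $R^*$: if $u\in 1+A^*$, then $u^{-1}\in 1+A^*$ since $A^*$ is an ideal of $V^*$. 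So $A^*\subseteq\Jac(R^*)$, and therefore the maximal ideals are exactly the $r$ ideals $\mathfrak p_i^*$.

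For the localizations, take $s\in R^*\setminus\mathfrak p_i^*$. Approximation produces such $s$ that are congruent to $1$ modulo $A_j^*$ for $j\ne i$ and have any prescribed unit image in $\overline C_i$. Inverting these elements kills the other factors, and one gets $R^*_{\mathfrak p_i^*}=C_i+A_i^*$. For the inclusion $\supseteq$: given $c+a$ with $c\in C_i$ and $a\in A_i^*$, approximation gives $s\in R^*$ with $s\equiv1$ modulo $A_j^*$ for $j\ne i$ and $s\in A_i^*$, in fact with large $v_i(s)$, so that $(1-s)$ is a unit at $i$. Then choose $x\in V^*$ with $x\equiv c+a$ modulo a high power at $i$ and $x\in A_j^*$ for $j\ne i$. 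Such an $x$ lies in $R^*$, and $(c+a)$ is a quotient of elements of this kind. For the inclusion $\subseteq$, note that $C_i+A_i^*$ is a local ring: its units are the elements whose $C_i$-part is a unit, and elements of $R^*\setminus\mathfrak p_i^*$ are units there.

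Finally, dimension one follows because $R^*_{\mathfrak p_i^*}\subseteq V_i^*$ is a pullback of the rank-one valuation ring $V_i^*$ over the zero-dimensional ring $\overline C_i$, so its primes are $0$ and the maximal ideal. One can check this directly: a nonzero $z$ has some power in $A_i^*$, since $v_i(z^m)\ge H_i$ when $v_i(z)>0$.

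The main obstacle is the approximation bookkeeping. It must produce elements of $V^*$ with prescribed classes modulo each $A_i^*$ simultaneously, and it must keep track of the fact that in the zero case $C_i$ is a field meeting $A_i^*$ trivially, so that $C_i+A_i^*$ is a direct sum. The key step I would verify first is the surjectivity of $V^*\to\prod_iV_i^*/A_i^*$ via the approximation theorem for independent valuations.
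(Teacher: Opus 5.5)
The Chinese-remainder step and the description of $R^*$ as the preimage of $\prod_i\overline C_i$ in $V^*$ match the paper. Your semilocality argument is different but valid. Since $1+A^*$ is a group of units of $V^*$ lying in $R^*$, you get $A^*\subseteq\Jac(R^*)$.

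The paper instead shows that every prime of $R^*$ not containing $A^*$ is zero, because $R^*[1/a]=V^*[1/a]=F$ for $0\ne a\in A^*$. That gives semilocality and dimension one together, without using \eqref{eq:localbase}. Your route makes dimension depend on \eqref{eq:localbase}.

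Delete the integrality sentence, since it is false in general. The Gauss extensions in Lemma~\ref{lem:commonfield} (for instance the one in $Z$) give $V_i^*$ residues that are transcendental over $k_{i,0}$. A monic equation over $C_i+A_i^*$, reduced modulo the maximal ideal of $V_i^*$, would make such a residue algebraic over $k_{i,0}$.

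The step that fails as written is the inclusion $C_i+A_i^*\subseteq R^*_{\mathfrak p_i^*}$, and your dimension argument rests on it. Your $s$ and $x$ never exhibit $z=c+a$ as a fraction. An element $a\in A_i^*$ is constrained only by $v_i$, so $z$ can have arbitrarily negative value at the other centers. An $x$ approximating $z$ at $i$ leaves an error $z-x$ with the same poles, and you never handle that error. Dividing by $1-s$ with only $1-s\in A_j^*$ does not help either, because $(1-s)z$ need not lie in $C_j+A_j^*$.

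The missing idea is that the denominator must depend on $z$. By approximation, choose $h$ with $h-1\in A_i^*$ and $v_j(h)\ge H_j+\max(0,-v_j(z))$ for every $j\ne i$. Then $h\in R^*\setminus\mathfrak p_i^*$. Also $hz\in R^*$: at $i$ because $C_i+A_i^*$ is a ring containing $h$ and $z$, and at $j\ne i$ because $hz\in A_j^*$. Hence $z=(hz)/h$, which is exactly the paper's argument. Your reverse inclusion, via locality of $C_i+A_i^*$, is correct and spells out the half the paper leaves implicit.

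Finally, your direct check of dimension one proves the wrong half. Knowing that every element of the maximal ideal $\mathfrak m$ of $C_i+A_i^*$ has a power in $A_i^*$ only shows $\mathfrak m=\sqrt{A_i^*}$. What you need is that every nonzero prime $P$ contains $\mathfrak m$. Take $0\ne y\in P$ and $b\in\mathfrak m$. Then $v_i(b^m/y)\ge H_i$ for large $m$, so $b^m\in y(C_i+A_i^*)\subseteq P$. Alternatively, use the paper's global conductor argument, which bypasses \eqref{eq:localbase} entirely.
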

	\begin{proof}
		Approximation for the independent valuations gives
		\[
		V^*/A^*\cong\prod_i V_i^*/A_i^*.
		\]
		The inverse image of $\prod_i\overline C_i$ is exactly $R^*$, so
		\eqref{eq:basequot} follows. In the positive case
		$C_i\cap A_i^*=p_i^{n_i}C_i$; in the field case the intersection is
		zero.
		
		The intersection $V^*$ is a one-dimensional semilocal Pr\"ufer
		domain, and its maximal localizations are the $V_i^*$. This again
		follows from approximation: one can clear poles at all other centers
		without changing a residue at a chosen center. Its nonzero primes are
		its $r$ maximal ideals. The ideal $A^*$ is a nonzero ideal of $V^*$
		contained in all these maximal ideals, and it is a conductor ideal
		into $R^*$.
		
		Any prime of $R^*$ not containing $A^*$ is zero. Indeed, choose
		$0\ne a\in A^*$ outside that prime. Then
		$R^*[1/a]=V^*[1/a]=F$. Primes containing $A^*$ correspond, by
		\eqref{eq:basequot}, to the $r$ maximal ideals of the displayed
		zero-dimensional ring. This proves dimension and semilocality.
		
		For completeness, let $z\in C_i+A_i^*$. Approximation gives $h$
		congruent to $1$ modulo $A_i^*$, with value at all other centers so
		large that both $h$ and $hz$ belong to $R^*$. Then
		$h\notin\mathfrak p_i^*$ and $z=(hz)/h$. This proves
		\eqref{eq:localbase}.
	\end{proof}
	
	Choose $e_i\in R^*$ mapping in \eqref{eq:basequot} to $1$ in the
	$i$-th coordinate and to $0$ elsewhere. Choose every later lift of a
	coefficient in $\overline C_i$ in the same way, with zero in the other
	coordinates. Composing \eqref{eq:basequot} with the fixed coefficient
	maps gives a homomorphism
	\begin{equation}\label{eq:baseeval}
		R^*\longrightarrow B.
	\end{equation}
	
	Approximation also gives nonzero $s_1,\ldots,s_r,w\in F$ such that
	\begin{align}
		s_i/\sigma_i&\in1+A_i^*,&
		s_i&\in1+A_j^*\quad(j\ne i),\label{eq:sapprox}\\
		w/p_i&\in1+A_i^*\quad(p_i>0),&
		w/\sigma_i^2&\in1+A_i^*\quad(p_i=0).\label{eq:wapprox}
	\end{align}
	To get a ratio congruence, it suffices to approximate the numerator
	with precision $H_i$ plus the value of its denominator. Note that
	$w\in R^*$: in positive residue characteristic it is congruent to
	$p_i$ modulo $A_i^*$, and in zero residue characteristic it lies in
	$A_i^*$.
	
	We extend these valuations by the Gauss rule to a rational field
	$K=F(U_\lambda)$. Write $V_i,A_i$ for the extended valuation ring and
	the ideal of values at least $H_i$, and set
	\begin{equation}\label{eq:R}
		R=\bigcap_i(C_i+A_i)\subseteq K.
	\end{equation}
	Lemma~\ref{lem:pullback} holds verbatim for $R$, and $R^*\subseteq R$.
	In particular, all coefficient choices already made in $R^*$ remain
	valid. This order of choices ensures that they are independent of all
	variables $U_\lambda$ and $Y_\lambda$ introduced below.
	
	\section{Independent error variables}
	
	For each positive-characteristic index, lift the polynomials in
	Lemma~\ref{lem:puretower} to $C_i$ and choose lifts $u_{i,\alpha}$
	of the residue generators in $B_i$. We regard them as elements of $B$
	with zero coordinates outside $i$.
	
	Lift every coefficient of the resulting polynomial to $R^*$ with
	support only in coordinate $i$, as specified after
	Lemma~\ref{lem:pullback}. Let $F_\alpha$ denote the lifted polynomial
	in the earlier indeterminates. Define
	\begin{equation}\label{eq:genericP}
		P_\alpha(Z)=Z^{d_\alpha}-(1-e_i)Z
		-F_\alpha(Z_\beta\mid\beta<\alpha),
		\qquad \alpha\in\Omega_i.
	\end{equation}
	Modulo $A_i$ this is the desired lifted purely inseparable equation.
	Modulo $A_j$ for $j\ne i$ it is $Z^{d_\alpha}-Z$.
	Evaluation in $B$ gives an error
	\[
	b_{\lambda(\alpha)}=P_\alpha(u_{i,\alpha})\in\Jac(B)
	\]
	supported only in factor $i$. Assign a new index $\lambda(\alpha)$ to
	each equation, even when errors are equal or zero.
	
	Let $(b_\lambda)_{\lambda\in\Lambda}$ consist of all these fresh
	errors and a family containing every element of each
	$\mathfrak m_i$, supported only in its own factor. Let
	$b_{i,\lambda}$ be its $i$-th coordinate. Choose
	\begin{equation}\label{eq:N}
		N_\lambda\ge2,\qquad b_\lambda^{N_\lambda}=0,
		\qquad N_\lambda\ge n_i\ \text{for every }p_i>0.
	\end{equation}
	For a zero error the first exponent condition is automatic.
	
	Take $Y_\lambda$ algebraically independent over $F$ and set
	\begin{equation}\label{eq:fields}
		U_\lambda=Y_\lambda^{N_\lambda},\quad
		K=F(U_\lambda\mid\lambda\in\Lambda),\quad
		L=F(Y_\lambda\mid\lambda\in\Lambda),\quad
		x_\lambda=wY_\lambda.
	\end{equation}
	Use this $K$ in \eqref{eq:R}. Extend each $v_i$ to $L$ by the Gauss
	rule on the $Y_\lambda$. The residue variables $Y_\lambda$ are
	independent over the residue field of $F$.
	
	In an algebraic closure of $L$, choose successively elements
	$z_\alpha$ satisfying
	\begin{equation}\label{eq:z}
		P_\alpha(z_\alpha)=x_{\lambda(\alpha)}.
	\end{equation}
	Let $E=L(z_\alpha\mid\alpha\in\Omega)$.
	
	\begin{lemma}\label{lem:genericbasis}
		The monomials
		\begin{equation}\label{eq:basis}
			Y^e z^h,\qquad
			0\le e_\lambda<N_\lambda,\quad
			0\le h_\alpha<d_\alpha,
		\end{equation}
		with both exponent vectors of finite support, form a $K$-basis of $E$.
		Each $v_i$ can be extended to $E$ so that the residue of $z_\alpha$
		is $\xi_{i,\alpha}$ when $\alpha\in\Omega_i$, and is $0$ otherwise.
	\end{lemma}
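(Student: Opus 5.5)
The plan is to prove the basis statement by a degree count in two layers, $K\subseteq L\subseteq E$, and to obtain the valuation extension by the same transfinite induction along the well-ordered set $\Omega$. First, $L=K(Y_\lambda)$ with $Y_\lambda^{N_\lambda}=U_\lambda$, and the $U_\lambda$ are algebraically independent over $F$. So $Y_\lambda$ has degree exactly $N_\lambda$ over $K(Y_\mu\mid\mu\ne\lambda)$: the polynomial $X^{N_\lambda}-U_\lambda$ is Eisenstein in the variable $U_\lambda$ over $F(U_\mu\mid\mu\ne\lambda)[U_\lambda]$, and adjoining the other $Y_\mu$ only adds elements algebraic over a subfield not involving $U_\lambda$. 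Taking finite subfamilies and then unions, the monomials $Y^e$ with $0\le e_\lambda<N_\lambda$ form a $K$-basis of $L$.

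The second layer is to show that, over $L_\alpha:=L(z_\beta\mid\beta<\alpha)$, the polynomial $P_\alpha(Z)-x_{\lambda(\alpha)}$ is irreducible of degree $d_\alpha$. The key point is that $\lambda(\alpha)$ is a fresh index, so $Y_{\lambda(\alpha)}$ is still transcendental over the subfield generated by $F$, the other $Y_\mu$, and the earlier $z_\beta$. The hardest point is to check this, because the earlier $z_\beta$ are algebraic over fields involving only their own error variables $Y_{\lambda(\beta)}$, all distinct from $Y_{\lambda(\alpha)}$. I would therefore first set $M_\alpha=F(Y_\mu\mid\mu\ne\lambda(\alpha))(z_\beta\mid\beta<\alpha)$, so that $L_\alpha=M_\alpha(Y_{\lambda(\alpha)})$ with $Y_{\lambda(\alpha)}$ transcendental over $M_\alpha$. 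The equation $P_\alpha(Z)-wY_{\lambda(\alpha)}=0$ is linear in $Y_{\lambda(\alpha)}$ with nonzero coefficient $w$. Hence $P_\alpha(Z)-wY$ is irreducible in $M_\alpha[Y][Z]$, being primitive and of degree $1$ in $Y$, and by Gauss's lemma it is irreducible over $M_\alpha(Y)$. This gives $[L_{\alpha+1}:L_\alpha]=d_\alpha$ with basis $1,z_\alpha,\dots,z_\alpha^{d_\alpha-1}$. At limits one takes unions, and the tower law then yields the basis \eqref{eq:basis}.

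For the valuation, I would extend $v_i$ along the same tower. At step $\alpha$, use the Gauss extension in $Y_{\lambda(\alpha)}$ over $M_\alpha$ and regard $z_\alpha$ as a root of $P_\alpha(Z)=wY_{\lambda(\alpha)}$. Since $v_i(w)\ge H_i>0$, the right side has positive value, and $P_\alpha$ has coefficients in $V_i$. Reduction modulo the maximal ideal gives $\bar P_\alpha(\bar z_\alpha)=0$. By \eqref{eq:genericP}, modulo $A_i$ this equation is $\bar z^{d_\alpha}=f_\alpha(\bar z_\beta)$, the tower relation of Lemma~\ref{lem:puretower}, while for $j\ne i$ it is $\bar z^{d_\alpha}=\bar z$.

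Among the extensions of $v_i$ to $L_{\alpha+1}$ we must select one whose residue of $z_\alpha$ is the prescribed root. For index $i$ this root is unique, since the extension is purely inseparable, so $\bar z_\alpha=\xi_{i,\alpha}$ for any extension. For $j\ne i$ the roots of $\bar Z^{d}-\bar Z$ include $0$, and $z_\alpha=0$ is a simple root of $Z^d-Z$. By Hensel's lemma in the henselization, some root of $P_\alpha(Z)-x$ lies near $0$, and we choose the extension of $v_j$ (through a conjugate embedding) that sends $z_\alpha$ to it. Before invoking Hensel, I need the coefficient $F_\alpha$ to vanish modulo $A_j$, which holds since the coefficient lifts are supported in coordinate $i$ only. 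Integrality of all lifted coefficients keeps the induction inside $V_i$. At limits the compatible extensions are glued by taking unions.
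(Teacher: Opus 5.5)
Your proposal is correct and follows essentially the paper's proof. It uses the same two-layer tower $K\subseteq L\subseteq E$ and the same fresh transcendental $Y_{\lambda(\alpha)}$ to make $P_\alpha(Z)-wY_{\lambda(\alpha)}$ irreducible of degree $d_\alpha$, where you use Gauss's lemma and the paper counts degrees in $G(Z)/G(P(Z))$. It also makes the same stepwise choice of residue roots, where you use Hensel's lemma in a henselization and the paper takes a prime above the maximal ideal. Your extra observation, that at the native center the residue $\xi_{i,\alpha}$ is forced by pure inseparability, is correct.

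One small slip: at a positive-residue-characteristic center $v_i(w)=1$, so $v_i(w)\ge H_i=n_i$ is false when $n_i\ge2$. Your argument only needs $v_j(w)>0$ at every center, and that does hold.
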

	\begin{proof}
		The monomials $Y^e$ form a $K$-basis of $L$: take finite subsets of
		the independent variables and use the power relations
		$Y_\lambda^{N_\lambda}=U_\lambda$.
		
		At stage $\alpha$, all earlier $z_\beta$ are algebraic over the field
		generated by $F$ and the $Y$'s other than $Y_{\lambda(\alpha)}$.
		Their defining equations never used that fresh variable. Let $G$ be
		that field with all earlier $z_\beta$ adjoined. The preceding field in
		the tower is $G(Y_{\lambda(\alpha)})$. The polynomial
		\[
		P_\alpha(Z)-wY_{\lambda(\alpha)}
		\]
		is irreducible there and has degree $d_\alpha$. Indeed, for any field
		$G$ and a degree-$d$ polynomial $P$, the rational function extension
		$G(Z)/G(P(Z))$ has degree $d$. We apply this to the indeterminate
		$wY_{\lambda(\alpha)}$. To check the degree assertion, suppose
		$1,Z,\ldots,Z^{d-1}$ were dependent over $G(P(Z))$. Clearing
		denominators would give
		$\sum_{j=0}^{d-1} A_j(P(Z))Z^j=0$, with $A_j\in G[T]$ not all
		zero. The nonzero summands have distinct degrees $d\deg A_j+j$. The
		largest-degree terms cannot cancel. Hence these $d$ elements are
		independent; the equation defining $P(Z)$ gives the opposite bound.
		Thus the standard powers of each $z_\alpha$ form a basis at that step.
		Combining the two towers proves \eqref{eq:basis}.
		
		The valuation extensions can be chosen at the same time. All
		coefficients of each monic equation have nonnegative value, and
		$x_{\lambda(\alpha)}$ has positive value. At the center belonging to
		$\alpha$, the residue equation is the purely inseparable equation in
		\eqref{eq:genericP}, and its prescribed residue root exists. At
		another center the residue equation is $Z^{d_\alpha}-Z$, and $0$ is a
		simple root. A prime above the valuation maximal ideal with this
		chosen residue root gives an extension of the valuation to the root
		field. This is the usual extension of a place through an algebraic
		extension \cite[Chapter VI, Section 11]{ZS}. Taking unions gives
		compatible extensions on $E$.
	\end{proof}
	
	\begin{remark}\label{rem:finiteclosure}
		All normal-form arguments below take place in finite systems. A finite
		set of indices has a finite closure under the earlier indices occurring
		in their equations. To see this, unfold the dependencies as a rooted
		tree. The tree is finitely branching. An infinite branch would give a
		strictly decreasing sequence of well-ordered indices, which is
		impossible. By K\"onig's lemma the tree is finite. In a finite closed
		system, reducing the largest index first gives a terminating monic
		division procedure. The lower term $(1-e_i)Z$ in \eqref{eq:genericP}
		also has degree smaller than $d_\alpha$.
	\end{remark}
	
	\section{The integral domain construction}\label{sec:construction}
	
	For $p_i>0$, call a pair $(r,\nu)$ \emph{$i$-bad} if
	\[
	p_i^r b_i^\nu=0\quad\text{in }B_i,
	\qquad r\ge0.
	\]
	For $p_i=0$, use only pairs $(0,\nu)$ with $b_i^\nu=0$.
	No bad pair has $r+|\nu|=0$.
	
	For $\ell\ge1$ and $i$-bad pairs
	$(r_1,\nu^{(1)}),\ldots,(r_\ell,\nu^{(\ell)})$, put
	\begin{equation}\label{eq:q}
		q_i=s_i^{-(2\ell-1)}p_i^{r_1+\cdots+r_\ell}
		x^{\nu^{(1)}+\cdots+\nu^{(\ell)}}.
	\end{equation}
	In the zero-characteristic case we omit the $p_i$ factor. Let
	$\calQ_i$ be the set of all such elements, and define
	\begin{equation}\label{eq:D0}
		D_0=R[s_1,\ldots,s_r,x_\lambda,z_\alpha,
		\calQ_1,\ldots,\calQ_r]\subseteq E.
	\end{equation}
	Concatenating the lists of bad pairs gives the exact relation
	\begin{equation}\label{eq:Qsquare}
		q,q'\in\calQ_i\quad\Longrightarrow\quad qq'=s_iq''
		\quad\text{for some }q''\in\calQ_i.
	\end{equation}
	
	\begin{lemma}\label{lem:integral}
		The domain $D_0$ is integral over $R$ and has dimension one.
	\end{lemma}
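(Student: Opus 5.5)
The plan is to prove integrality generator by generator, using a single valuation-theoretic criterion, and then to get the dimension statement from the integral extension theorems (lying over, incomparability, going up). By Lemma~\ref{lem:pullback}, which holds verbatim for $R$, we know that $R$ is a one-dimensional semilocal domain. Moreover the ideal $A=\bigcap_iA_i$ of elements of $K$ with $v_i\ge H_i$ for all $i$ is contained in $R$. Since integral elements form a subring, it is enough to show that each of $s_i$, $x_\lambda$, $z_\alpha$ and each $q\in\calQ_i$ is integral over $R$.

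\textbf{Step 1: a criterion.} Let $y\in L$ be such that $w(y)>0$ for every extension $w$ to a normal closure of $L/K$ of every $v_i$ (on $K$). Then some power $y^m$ has $w(y^m)\ge H_i$ for all such $w$. The coefficients of the characteristic polynomial of $y^m$ over $K$ are elementary symmetric functions of the $K$-conjugates of $y^m$, so each lies in $K$ and has $v_i$-value at least $H_i$ for every $i$. Hence these coefficients lie in $A\subseteq R$, so $y^m$ is integral over $R$, and therefore so is $y$.

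To verify the hypothesis we only need to treat elements of the form $y=f\cdot Y^e$ with $f\in F$. Any extension $w$ of $v_i$ to an extension of $K$ satisfies $w(Y_\lambda)=v_i(U_\lambda)/N_\lambda=0$, so $w(y)=v_i(f)$, independently of the choice of extension.

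\textbf{Step 2: the generators $s_i$, $x_\lambda$, $q$.} The element $s_i$ itself need not have positive value at the centers $j\ne i$ (it is $\equiv 1$ there). So we apply the criterion to $s_i(s_i-1)$ instead. This element has positive $v_i$-value, because $v_i(s_i)=v_i(\sigma_i)>0$. At $j\ne i$ its value is $\ge H_j$, by \eqref{eq:sapprox}. Thus $s_i(s_i-1)$ is integral over $R$, and then $s_i$, being a root of a monic polynomial over $R[s_i(s_i-1)]$, is integral over $R$ as well. For $x_\lambda=wY_\lambda$, the element $w$ has positive value at every center by \eqref{eq:wapprox}, so Step 1 applies directly.

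For $q\in\calQ_i$ as in \eqref{eq:q}, at a center $j\ne i$ the factor $s_i$ is a unit, while the $x$-part has positive value because $r+|\nu|\ge1$. At the center $i$ we need to count values.
\begin{itemize}
\item In positive residue characteristic, each bad pair contributes $r+|\nu|\ge1$, while $v_i(s_i)=c/(2c+1)$. So
\[
v_i(q)\ge \ell-(2\ell-1)\frac{c}{2c+1}=\frac{2c+\ell}{2c+1}>0 .
\]
\item In residue characteristic zero, each pair contributes $2|\nu|\ge2$ and $v_i(s_i)=1$. So $v_i(q)\ge 2\ell-(2\ell-1)>0$.
\end{itemize}
In both cases Step 1 applies.

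\textbf{Step 3: the generators $z_\alpha$ and the dimension.} By \eqref{eq:z}, $z_\alpha$ is a root of the monic polynomial $P_\alpha(Z)-x_{\lambda(\alpha)}$. The coefficients of this polynomial lie in $R[x_{\lambda(\alpha)},z_\beta\mid\beta<\alpha]$ and involve only finitely many earlier $z_\beta$. By Remark~\ref{rem:finiteclosure} the finite closure of $\{\alpha\}$ under this dependence is a finite system. Induction on that finite system, together with transitivity of integrality, shows that each $z_\alpha$ is integral over $R$.

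Hence $D_0$ is integral over $R$. For the dimension, integral extensions preserve Krull dimension by lying over, going up and incomparability, so $\dim D_0=\dim R=1$.

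I expect the main obstacle to be the valuation bookkeeping in Step 1. One must check that every extension $w$ to the normal closure, and not only the chosen Gauss-type extension, gives the same value on the relevant elements. This is why the argument works with elements of $L$ that are monomials in the $Y_\lambda$, and avoids applying the criterion to elements involving the $z_\alpha$.
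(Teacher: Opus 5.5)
There is a genuine gap in Step 2, in your treatment of the divided elements $q\in\calQ_i$ at the foreign centers $j\ne i$. You claim the numerator $p_i^{r'}x^{\mu}$ has positive $v_j$-value because each bad pair has $r+|\nu|\ge1$. This fails when every pair has $\nu=0$, and such pairs exist. For $p_i>0$, the pair $(r,0)$ is $i$-bad exactly when $p_i^r=0$ in $B_i$, i.e.\ $r\ge n_i$, so for example $q=s_i^{-1}p_i^{n_i}\in\calQ_i$.

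For $q=s_i^{-(2\ell-1)}p_i^{r'}$ of this kind, $v_j(q)=r'v_j(p_i)=0$ at every center $j$ of residue characteristic zero or of a prime different from $p_i$. Those valuations are trivial, respectively $p_j$-adic, on $\QQ$. The residue of $q$ there is the nonzero image of the integer $p_i^{r'}$, so no power of $q$ ever enters $A$. Your Step~1 criterion requires strictly positive value at every center, so it simply does not apply to these generators. This is exactly the mixed-characteristic situation the construction is built for (several residue characteristics among the factors $B_i$), so the case cannot be set aside.

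The missing point is the one you already used for $s_i$. $R$ is the pullback of $\prod_j\overline C_j$, not just scalars plus the conductor $A$. Elements with a nonzero coefficient residue at a foreign center must therefore be handled by a congruence, not by positivity. Concretely, $q\in F$ and $s_i^{-1}\in 1+A_j$, so $q^m\in p_i^{r'm}+A_j\subseteq C_j+A_j$ for all $m$ and all $j\ne i$. Since $v_i(q)>0$, also $q^m\in A_i$ for large $m$, and hence $q^m\in R$. Equivalently, apply your criterion to $q(q-p_i^{r'})$, which has positive value everywhere, and use the monic relation $T^2-p_i^{r'}T-q(q-p_i^{r'})=0$. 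The paper makes exactly this split, between $|\mu|>0$ and $\mu=0$.

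With that repair the rest of your argument goes through: the characteristic-polynomial criterion, the $s_i(s_i-1)$ trick, transitivity for $z_\alpha$, and the dimension count. There is also a small arithmetic slip that does not affect positivity: $\ell-(2\ell-1)c/(2c+1)=(c+\ell)/(2c+1)$, not $(2c+\ell)/(2c+1)$.
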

	\begin{proof}
		The values of $w$ are $1$ at positive-characteristic centers and $2$
		at zero-characteristic centers. Hence
		$x_\lambda^{N_\lambda}=w^{N_\lambda}U_\lambda$ lies in every $A_i$,
		by \eqref{eq:N}, and therefore in $R$; this proves that $x_\lambda$
		is integral. A sufficiently large power of $s_i$ lies in $A_i$ at its
		own center and is congruent to $1$ modulo $A_j$ at all other centers,
		so that power belongs to $R$. The $z_\alpha$ are integral
		successively by their monic equations and transitivity of integrality.
		
		Consider \eqref{eq:q}, and put $r'=\sum r_j$, $\mu=\sum\nu^{(j)}$.
		At a positive-characteristic center $i$,
		\[
		v_i(q_i)=r'+|\mu|-(2\ell-1)c_i/a_i
		\ge \ell-(2\ell-1)c_i/a_i=(\ell+c_i)/a_i>0.
		\]
		At a zero-characteristic center $i$, $|\mu|\ge\ell$ and the value is
		at least $2\ell-(2\ell-1)=1$. Take a power that removes all $Y$
		exponents using the $N_\lambda$. Increase it until the value at center
		$i$ is at least $H_i$. At a different center $j$ the denominator $s_i$
		is congruent to $1$ modulo $A_j$. If $|\mu|>0$, large powers of the
		numerator have value at least $H_j$. If $\mu=0$, the numerator is an
		integer $p_i^{r'}$, and every power of $q_i$ is congruent modulo $A_j$
		to the corresponding power of that integer, which lies in $C_j$.
		Hence one common power of $q_i$ belongs to every $C_j+A_j$, and so to
		$R$. This proves integrality. All generators in \eqref{eq:D0} are
		integral. Dimension does not change under an integral extension, so
		$\dim D_0=\dim R=1$.
	\end{proof}
	
	The extended valuations are nonnegative on $D_0$. Their residue
	images are exactly $k_i$: the coefficient image is $k_{i,0}$, the
	native $z_\alpha$ give its purely inseparable generators, the foreign
	$z_\alpha$ give zero, and the remaining generators have residues in
	this field. In particular, $s_i,x_\lambda,\calQ_i$ have residue zero
	at center $i$, while the other $s_j$ have residue one. A foreign
	divided element has residue zero if it contains an $x$; otherwise its
	residue is the indicated power of an integer.
	
	Let $\mathfrak n_i$ be the kernel of the residue map $D_0\to k_i$.
	These are distinct maximal ideals, since $s_i$ lies in $\mathfrak n_i$
	and not in the others. Set
	\begin{equation}\label{eq:D}
		D=(D_0)_{D_0\setminus\bigcup_i\mathfrak n_i}.
	\end{equation}
	This is a one-dimensional semilocal domain with exactly these maximal
	ideals, extended to $D$. Each is nonzero because it contains
	$s_i\ne0$.
	
	\section{The local quotient and its normal form}\label{sec:normal}
	
	Fix $i$, and put $S_i=D_0R_{\mathfrak p_i}$. In this section all
	localizations of $R$ are taken inside $K$. Write $R_i=C_i+A_i$.
	
	At center $i$, all foreign $s_j$ are units of $R_i$, and their divided
	elements lie in $R_i[x_\lambda]$. We may replace $s_i$ by $\sigma_i$
	and $x_\lambda$ by
	\begin{equation}\label{eq:normalizedx}
		\widehat x_\lambda=
		\begin{cases}p_iY_\lambda,&p_i>0,\\
			\sigma_i^2Y_\lambda,&p_i=0.
		\end{cases}
	\end{equation}
	The ratios in \eqref{eq:sapprox} and \eqref{eq:wapprox} are units of
	$R_i$, congruent to $1$ modulo $A_i$. Hence the normalized divided
	elements, defined by \eqref{eq:q} with $\sigma_i$ and $\widehat x$,
	generate the same $R_i$-algebra. Denote their set by
	$\widehat{\calQ}_i$. From now on we write $x$ for $\widehat x$ in the
	local calculation. Define
	\begin{equation}\label{eq:J}
		J_i=(A_i,\sigma_i,\widehat{\calQ}_i)S_i.
	\end{equation}
	
	Let $\calZ_i$ be the ideal of $C_i[X_\lambda]$ generated by the
	$i$-bad monomials, with coefficients $p_i^r$ in the positive case, and
	put
	\[
	P_i=C_i[X_\lambda]/\calZ_i.
	\]
	For $p_i>0$ this ideal contains $p_i^{n_i}$; in both cases it contains
	$X_\lambda^{N_\lambda}$ for every $\lambda$.
	
	Take the $C_i$ parts of the coefficients of the normalized equations.
	They give the following monic algebra over $P_i$:
	\begin{equation}\label{eq:Ti}
		T_i=P_i[Z_\alpha\mid\alpha\in\Omega]/
		\left(
		\begin{array}{ll}
			Z_\alpha^{d_\alpha}-\widetilde f_{i,\alpha}(Z_\beta)-X_{\lambda(\alpha)},
			&\alpha\in\Omega_i,\\
			Z_\alpha^{d_\alpha}-Z_\alpha-X_{\lambda(\alpha)},
			&\alpha\notin\Omega_i
		\end{array}\right).
	\end{equation}
	For a foreign index, $b_{i,\lambda(\alpha)}=0$, so its last $X$ term
	is already zero in $P_i$. In a positive-characteristic coefficient
	ring, choices differing by $p_i^{n_i}C_i$ give the same relations. All
	differences between the actual normalized coefficients in $R_i$ and
	the displayed $C_i$ coefficients lie in $A_i$.
	
	There is a surjection
	\begin{equation}\label{eq:surjTi}
		T_i\longrightarrow S_i/J_i,
		\qquad X_\lambda\longmapsto x_\lambda+J_i,
		\quad Z_\alpha\longmapsto z_\alpha+J_i.
	\end{equation}
	Indeed, every bad monomial is $\sigma_i$ times its first divided
	element, hence zero in this quotient. The remaining generators and
	equations have just been described.
	
	\begin{lemma}\label{lem:abstractbasis}
		The algebra $T_i$ is free as a $P_i$-module on the standard monomials
		$Z^h$, $0\le h_\alpha<d_\alpha$. In positive residue characteristic,
		it is the direct sum, over standard $h$ and good $e$, of the
		coefficient modules
		\[
		(C_i/p_i^{n_e}C_i)X^eZ^h,\qquad
		n_e=\min\{v\ge0:p_i^vb_i^e=0\}.
		\]
		Here good $e$ means $b_i^e\ne0$, so $1\le n_e\le n_i$. In zero
		residue characteristic its good $X^eZ^h$ are a $C_i$-basis.
	\end{lemma}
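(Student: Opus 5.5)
The statement splits into two parts. First, freeness of $T_i$ over $P_i$ on the standard monomials $Z^h$. Second, an explicit description of $P_i$ as a direct sum of cyclic coefficient modules indexed by good monomials $X^e$. Tensoring the second with the first gives the displayed decomposition of $T_i$.

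\emph{Step 1: freeness.} I would argue by reduction to finite closed systems, as in Remark~\ref{rem:finiteclosure}. Each relation in \eqref{eq:Ti} is monic in $Z_\alpha$ of degree $d_\alpha$. Its remaining terms involve only earlier $Z_\beta$ and a coefficient from $P_i$; the lower term $Z_\alpha$ for foreign indices has degree $<d_\alpha$ since $d_\alpha\ge2$.

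For a finite closed set $\Phi\subseteq\Omega$, list $\Phi$ in increasing order. Then $T_\Phi=P_i[Z_\alpha\mid\alpha\in\Phi]/(\text{relations})$ is an iterated extension $T_{<\alpha}[Z_\alpha]/(\text{monic of degree }d_\alpha)$. Each such extension is free over the preceding ring on $1,Z_\alpha,\dots,Z_\alpha^{d_\alpha-1}$, by ordinary monic division. Hence $T_\Phi$ is free over $P_i$ on the standard monomials in $\Phi$.

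For $\Phi\subseteq\Phi'$ closed, the map $T_\Phi\to T_{\Phi'}$ sends standard basis to standard basis, so it is injective. $T_i$ is the directed union (direct limit) of the $T_\Phi$, since every relation and every element involves finitely many indices, whose closure is finite. A direct limit of free modules along basis inclusions is free on the union of the bases.

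\emph{Step 2: structure of $P_i$.} $\calZ_i$ is generated by elements $p_i^rX^\nu$, so it is a monomial ideal over the DVR $C_i$ (or over the field $C_i=k_i$). Hence $P_i=\bigoplus_e (C_i/I_e)X^e$, where
\[
I_e=\{c\in C_i: cX^e\in\calZ_i\}.
\]
The key check is that $I_e$ is generated by $p_i^{v}$ for $v$ minimal with $p_i^vX^e$ lying in the ideal. This holds because $p_i^rX^\nu$ divides $p_i^vX^e$ iff $r\le v$ and $\nu\le e$. Membership of a monomial in a monomial ideal is then detected by a single generator dividing it, and badness is inherited upward: if $p_i^rb_i^\nu=0$ then $p_i^{v}b_i^{e}=0$.

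Conversely, $p_i^vX^e$ with $p_i^vb_i^e=0$ is itself a bad generator. So $I_e=p_i^{n_e}C_i$ with $n_e$ exactly as stated, and $n_e\le n_i$ because $p_i^{n_i}=0$ in $B_i$. Good means $n_e\ge1$, i.e.\ $b_i^e\ne0$, while bad $e$ contribute zero. In zero residue characteristic the generators are $X^\nu$ with $b_i^\nu=0$, so $P_i$ is $C_i$-free on the good $X^e$.

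\emph{Main obstacle.} The main obstacle is justifying the monomial-ideal decomposition over $C_i$ when infinitely many variables are present. I would handle it by noting that $C_i[X_\lambda]=\bigoplus_e C_iX^e$ as a $C_i$-module. An ideal generated by terms $c\,X^\nu$ is then homogeneous for the $\mathbb{N}^{(\Lambda)}$-grading, so it equals $\bigoplus_e I_eX^e$, and each $I_e$ is computed from the divisibility check above. Combining with Step 1 gives
\[
T_i\cong\bigoplus_{h}\bigoplus_{e\text{ good}}(C_i/p_i^{n_e}C_i)X^eZ^h .
\]
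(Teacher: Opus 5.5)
Your proposal is correct and follows the same route as the paper's proof. Freeness comes from successive monic adjunction over finite closed systems (Remark~\ref{rem:finiteclosure}), and the coefficient description comes from the fact that the monomial ideal $\calZ_i$ creates no relations between different $X^e$. You add detail the paper leaves implicit, namely the direct-limit argument along basis inclusions and the explicit computation $I_e=p_i^{n_e}C_i$ via homogeneity of $\calZ_i$ and upward inheritance of badness.
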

	\begin{proof}
		Adjoining one variable with a monic degree-$d$ relation gives a free
		module on $1,Z,\ldots,Z^{d-1}$. Apply this successively, using finite
		closed systems as in Remark~\ref{rem:finiteclosure}. The coefficient
		module of a fixed $X^e$ in $P_i$ is exactly as stated, since the
		defining coefficient-monomial ideal has no relations between different
		$X$ monomials. A good $e$ necessarily satisfies $e_\lambda<N_\lambda$.
	\end{proof}
	
	\begin{lemma}[Occurrence count]\label{lem:count}
		Suppose a term contains $\ell$ bad monomial factors. During monic
		reduction of the $Z$ powers, variables and nonnegative powers of $p_i$
		may be added, but no original $X$ or $p_i$ factor is removed. After
		this reduction, let $r$ be the total $p_i$ exponent before reducing
		$X_\lambda^{N_\lambda}$, and let $\rho$ be the number of $X$
		occurrences removed in that reduction. If the final $p_i^vX^e$ is
		good, then
		\begin{equation}\label{eq:count}
			\ell\le\rho+\max(r-v,0).
		\end{equation}
		In particular, if $X^e$ is good, then $\ell\le\rho+r$. In zero
		residue characteristic, $\ell\le\rho$.
	\end{lemma}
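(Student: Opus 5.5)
The plan is to prove \eqref{eq:count} by a pigeonhole argument on labelled occurrences: every bad factor must be \emph{damaged} by the reductions, and each removed occurrence damages at most one factor.

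First I will fix the bookkeeping. Write the term as a coefficient times a product of $\ell$ bad monomial factors $p_i^{r_j}X^{\nu^{(j)}}$, where $(r_j,\nu^{(j)})$ is $i$-bad, together with further variables. I label each individual $X_\lambda$ occurrence and each individual $p_i$ factor, so that occurrences coming from different bad factors are distinct labels.

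Next I check that monic reduction of the $Z$ powers only multiplies. Each step replaces $Z_\alpha^{d_\alpha}$, using the relations in \eqref{eq:Ti}, by $\widetilde f_{i,\alpha}(Z_\beta)+X_{\lambda(\alpha)}$ or by $Z_\alpha+X_{\lambda(\alpha)}$. It then expands into a sum of terms. Each resulting term is the old term with $Z_\alpha^{d_\alpha}$ replaced by one summand. Such a summand is a monomial in earlier $Z$'s, possibly a new $X$, and a coefficient in $C_i$ that may contribute further powers of $p_i$. By Remark~\ref{rem:finiteclosure} this terminates in a finite closed system. Hence every labelled original $X$ and $p_i$ occurrence survives, and only new ones are added; this is the first assertion of the lemma.

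Then I pass to the step that does remove occurrences. Let $r$ be the total $p_i$ exponent at that stage. Reducing $X_\lambda^{N_\lambda}$ removes $N_\lambda$ occurrences of $X_\lambda$ at a time. The removed occurrences are interchangeable, so I may attribute them to arbitrary labels; there are $\rho$ of them in total. The final $p_i$ exponent is $v$. If $v<r$, exactly $r-v$ labelled $p_i$ factors have disappeared, and I attribute these to arbitrary labels as well. If $v\ge r$, no original $p_i$ factor is lost. So at most $\rho+\max(r-v,0)$ labels are removed.

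Call a bad factor \emph{intact} if none of its labels was removed. Suppose $\ell>\rho+\max(r-v,0)$. Each removed label belongs to at most one bad factor, so some bad factor $j$ is intact. Then $p_i^{r_j}X^{\nu^{(j)}}$ divides the final $p_i^vX^e$ coordinatewise in exponents. Evaluating at $b_i$, the element $p_i^vb_i^e$ is a multiple of $p_i^{r_j}b_i^{\nu^{(j)}}=0$, so $p_i^vX^e$ is not good. This contradiction proves \eqref{eq:count}. Since $\max(r-v,0)\le r$, the weaker bound $\ell\le\rho+r$ follows, and this is the form used when only goodness of $X^e$ is known. In zero residue characteristic there are no $p_i$ labels at all, which gives $\ell\le\rho$.

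The step I expect to be most delicate is the claim that no original occurrence is ever removed before the $X_\lambda^{N_\lambda}$ step. This requires checking that the coefficient lifts $\widetilde f_{i,\alpha}$ contribute only nonnegative $p_i$-powers. It also requires that the reduction of $X_\lambda^{N_\lambda}$, which passes to $U_\lambda$ or to zero in $P_i$, is the only source of removal. I would verify this directly from the shape of \eqref{eq:Ti} and from the definition of $P_i$.
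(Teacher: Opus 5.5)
Your argument is correct and is essentially the paper's proof. You label the occurrences in each original bad factor, discard the $\rho$ occurrences removed by the $X_\lambda^{N_\lambda}$ reductions together with $\max(r-v,0)$ occurrences of $p_i$, and note that fewer than $\ell$ discarded labels leave some bad group untouched, which then divides $p_i^vX^e$. One small correction: derive the clause ``if $X^e$ is good then $\ell\le\rho+r$'' by running the same argument with $v=0$, since goodness of $X^e$ is goodness of $p_i^0X^e$. Deducing it from $\max(r-v,0)\le r$ instead presupposes that the actual final $p_i^vX^e$ is good.
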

	\begin{proof}
		Label the occurrences in each original bad factor. Remove the $\rho$
		occurrences prescribed by the $X$-power reductions. If $r>v$, also
		remove $r-v$ occurrences of $p_i$; otherwise remove none. At most $v$
		original $p_i$ occurrences remain. If fewer than $\ell$ occurrences
		in total have been removed, some original bad group is untouched. That
		bad monomial divides $p_i^vX^e$, a contradiction. Added factors do not
		affect the argument. The zero-characteristic case has no $p_i$ factors.
	\end{proof}
	
	\begin{theorem}[Local normal form]\label{thm:normal}
		The map \eqref{eq:surjTi} is an isomorphism.
	\end{theorem}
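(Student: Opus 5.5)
Since \eqref{eq:surjTi} is already surjective, the work is injectivity: no element of $T_i$ outside the zero ideal maps into $J_i$. I will build a candidate inverse, or equivalently show that every element of $J_i$ reduces to zero in the normal form of Lemma~\ref{lem:abstractbasis}.

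\textbf{Step 1: describe $S_i$ and $J_i$.} By Lemma~\ref{lem:genericbasis} and the finite closure of Remark~\ref{rem:finiteclosure}, every element of $S_i$ is an $R_i$-combination of products of $\sigma_i$-powers, monomials in the $x_\lambda=p_iY_\lambda$ (or $\sigma_i^2Y_\lambda$), standard $z^h$, and normalized divided elements. Using the monic equations \eqref{eq:z} and the relation $Y_\lambda^{N_\lambda}=U_\lambda\in K$, each element of $J_i$ can be written in $E$ as a $K$-linear combination of the basis \eqref{eq:basis}. I then compare coefficients.

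\textbf{Step 2: injectivity by comparing coefficients.} Suppose a standard element $\sum_{e,h}c_{e,h}X^eZ^h$ of $T_i$, with $e$ good and $c_{e,h}\in C_i/p_i^{n_e}C_i$, maps into $J_i$. Writing the preimage in $J_i$ and reducing it to the basis \eqref{eq:basis}, I want to show that for every good $e$ the coefficient of $Y^ez^h$ differs from the image of $c_{e,h}p_i^{|e|}$ by something of value at least $|e|+n_e$ at $v_i$. This would force $c_{e,h}\in p_i^{n_e}C_i$. The valuation $v_i$ extends by the Gauss rule, so the value of a $K$-combination is the minimum over the basis coefficients, up to the residue issues of the $z$'s. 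Terms coming from $A_i$ contribute values at least $H_i=n_i$ plus the $x$-values, so they are harmless. Terms carrying a factor $\sigma_i$ or a divided element are handled with Lemma~\ref{lem:count}. If such a term contains $\ell$ bad factors, each carrying $s_i^{-(2\ell-1)}$, then after monic reduction the count \eqref{eq:count} bounds $\ell$ by $\rho+\max(r-v,0)$. Each removed $X$ occurrence turns into a $U_\lambda$ with extra value at least $1$ from $w$. Summing gives a total value of at least $v(\text{target})+\ell-(2\ell-1)c_i/a_i-\text{(extra }\sigma_i\text{ powers)}$, and with $a_i=2c_i+1$ this exceeds the target value by $(\ell+c_i)/a_i>0$.

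\textbf{Step 3: the main obstacle.} Exceeding the target value by a fraction is not yet enough, because the target coefficient lives in $p_i^{\mathbb Z}$-valued $C_i$. The error must not cancel $c_{e,h}$ modulo $p_i^{n_e}$. I expect this to be the hard part. I would first separate the terms whose $\sigma_i$-power is not divisible by $a_i$: their values are not integers, so they lie in different residue classes mod $\ZZ$ and cannot combine with the $C_i$-coefficient. Here the exponent comparison modulo $2c_i+1$ from the introduction enters, and it needs $c_i\ge n_i$ so that the fractional parts are distinct within the relevant range. Any remaining terms have integral $\sigma$-exponent, so they are honest $p_i$-powers times error, and for these the count gives a strict inequality of integers, hence a gain of at least one full power of $p_i$. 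Finally I must handle the constant coefficient, where the $z$-residues could interact. Since the $Y_\lambda$ have independent residues, the coefficient of $Y^e$ is separated from the others at the residue level. So $c_{e,h}\equiv0$ modulo $p_i^{n_e}$, and the zero-characteristic case is the same argument with $\ell\le\rho$ and no $p_i$.
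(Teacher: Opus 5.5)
\textbf{There is a genuine gap.} Your outline has the right ingredients: exact comparison of $K$-coefficients in the basis \eqref{eq:basis}, the occurrence count, and separation of $\tau$-exponent classes modulo $a=2c+1$. But the decisive step is never carried out, and the mechanism you describe for it is not the one that works. After the $A_i$-coefficient terms are removed, you must show that the constant-$U$, class-$0$ part of the coefficient of a good $x^ez^h$ coming from $J_i$ lies in $p^{n_e}C$. That is, every such contribution $p^vX^e$ must be \emph{bad}. Such a term is constant in $U$, so $\rho=0$, and its $\sigma$-exponent satisfies $j-2\ell+h=ak$; since $\gcd(a,c)=1$, this gives $v=r+ck$.

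When $k<0$ the $p$-exponent \emph{drops} below $r$. For instance, a single divided element with $\ell=c+1$ equals $p^{r'-c}x^{\mu}$. So there is no ``gain of a full power of $p_i$''. What saves you is that $a|k|\le 2\ell-1$ and $2c<a$ give $r-v=c|k|<\ell$. Then \eqref{eq:count} says that removing fewer than $\ell$ occurrences of $p$ cannot destroy all $\ell$ bad groups. When $k\ge0$, one has $v\ge r$, and \eqref{eq:count} forces $\ell\le0$. Neither case analysis appears in your sketch. The bound $(\ell+c)/a>0$ in your Step 2 is just positivity of $v_i(q)$ from Lemma~\ref{lem:integral} and plays no role here. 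No value comparison can replace the argument: $J_i$ contains elements such as $\sigma x^e$ whose coefficients have value below $|e|+n_e$. The conclusion has to be this combinatorial badness statement, applied only to the class-$0$, constant-$U$ component.

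Second, terms with a factor $\sigma^j$, $j\ge1$, but no divided factor have $\ell=0$. For them Lemma~\ref{lem:count} is vacuous, so your plan to treat ``terms carrying a factor $\sigma_i$'' by the count fails there. This is exactly where $c\ge n$ enters. In class $0$ one has $j=ak$ with $k\ge1$, so $\sigma^j=p^{ck}$ and $ck\ge c\ge n\ge n_e$. Your stated reason for $c\ge n$ is not correct: the classes $cj \bmod a$ are separated by $\gcd(c,2c+1)=1$ alone. Without $c\ge n$ the theorem is false: for $c=1$ and $n=2$ one has $p=\sigma^3\in J_i$, while $p\ne0$ in $P_i$.

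Two smaller points. An $A_i$ coefficient may multiply divided factors with denominator $\sigma^{2\ell-h}$. Your ``value at least $n_i$'' therefore needs $\ell\le r+\rho$ together with $a-2c=1$. These terms must also be disposed of before any class or $U$-extraction, since $A_i$ contains elements of every rational value with rational $U$-dependence. Finally, no ``value equals the minimum over basis coefficients'' principle holds for the $z^h$; a foreign $z_\alpha$ already has positive value. None is needed. Compare each $K$-coefficient exactly, and apply the injectivity of $\bigl(C[\tau]/\tau^{an_e}C[\tau]\bigr)[U_\lambda]\to V_i/\{z:v_i(z)\ge n_e\}$ to that single coefficient.
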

	\begin{proof}
		By Lemma~\ref{lem:abstractbasis}, a nonzero element of $T_i$ can be
		represented by a finite sum of good $X^eZ^h$ with coefficients in
		$C_i$, nonzero in their indicated coefficient modules. Its image in
		$E$ is a sum of the corresponding $x^ez^h$. These are part of the
		$K$-basis in Lemma~\ref{lem:genericbasis}, multiplied by nonzero
		scalars. We compare the coefficients separately.
		
		An expression in $J_i$ is a finite sum of terms containing a
		coefficient from $A_i$, a positive power of $\sigma_i$, or at least
		one normalized divided factor. Write every $R_i$ coefficient as a
		$C_i$ part plus an $A_i$ part. Reduce the $Z$ powers using their actual
		monic equations in $E$. These equations have exactly the displayed
		$C_i$ parts in \eqref{eq:Ti}; all other scalar coefficients lie in
		$A_i$. The expansion is finite. Each term still contains the monomial
		numerator of each original divided factor; monic reduction only adds
		$X$ factors or nonnegative $p_i$ powers. Then reduce the $Y$ exponents
		to the bounds $N_\lambda$. We now make the coefficient comparison
		precise.
		
		\smallskip\noindent
		\emph{Positive residue characteristic.}
		Suppress $i$, so $p=\tau^a$, $\sigma=\tau^c$, $a=2c+1$ and $c\ge n$.
		Suppose a term has $h$ divided factors involving $\ell$ bad leaves in
		total, plus an extra factor $\sigma^j$ with $j\ge0$. Their combined
		denominator is $\sigma^{2\ell-h}$. If there are no divided factors,
		take $h=\ell=0$. Write the final $X$ exponent before power reduction
		as
		\[
		\mu_\lambda=m_\lambda N_\lambda+e_\lambda,
		\qquad 0\le e_\lambda<N_\lambda,
		\qquad \rho=\sum_\lambda m_\lambda N_\lambda.
		\]
		Since $x_\lambda=pY_\lambda$, power reduction gives
		\[
		x^\mu=p^\rho U^m x^e.
		\]
		After absorbing any $p$-power from a nonzero $C$ coefficient into $r$,
		a coefficient of a good $x^ez^h$ has the form
		\begin{equation}\label{eq:coefficient}
			\gamma\,p^{r+\rho}\sigma^{j-2\ell+h}U^m,
			\qquad r,j\ge0,
		\end{equation}
		where $\gamma$ is a unit of $C$ or lies in $A$. The exponent $h$ here
		counts divided factors only; the standard $z$ exponent plays no role in
		the estimate.
		
		If $\gamma\in A$, its value in units of $v(\tau)$ is at least $an$.
		Since $X^e$ is good, Lemma~\ref{lem:count} gives $\ell\le r+\rho$.
		Hence the value of \eqref{eq:coefficient} is at least
		\begin{equation}\label{eq:higherror}
			an+a(r+\rho)+c(j-2\ell+h)
			\ge an+(a-2c)(r+\rho)+c(j+h)\ge an.
		\end{equation}
		Therefore all $A$-coefficient errors vanish when coefficients of a
		good $e$ are compared modulo value $an_e$, since $n_e\le n$.
		
		Now suppose $\gamma$ is a unit of $C$. The same estimate without $an$
		shows that the $\tau$ exponent is nonnegative. These terms lie in
		$C[\tau][U_\lambda]$. The map
		\begin{equation}\label{eq:injecttrunc}
			\bigl(C[\tau]/\tau^{an_e}C[\tau]\bigr)[U_\lambda]
			\longrightarrow V_i/\{z:v_i(z)\ge n_e\}
		\end{equation}
		is injective. To see this, note that $C[\tau]$ is the Eisenstein DVR
		with uniformizer $\tau$ and $\tau^a=p$. Its terms with distinct $\tau$
		exponents modulo $a$ have distinct fractional values. After terms of
		equal value are combined, independence of the residue variables
		$U_\lambda$ prevents further cancellation. Hence the value of a
		nonzero polynomial is the minimum of its coefficient values. This
		proves \eqref{eq:injecttrunc}.
		
		It follows that we can extract separately the constant $U$ coefficient
		and its $\tau$-exponent class $0$ modulo $a$. Such a constant $U$
		contribution requires $m=0$, hence $\rho=0$. If its $\tau$ exponent is
		$av$, then
		\[
		ar+c(j-2\ell+h)=av.
		\]
		Since $\gcd(a,c)=1$, there is an integer $k$ with
		\begin{equation}\label{eq:congruence}
			j-2\ell+h=ak,\qquad v=r+ck.
		\end{equation}
		First assume a divided factor is present, so $h\ge1$ and $\ell\ge1$.
		If $k\ge0$ and $p^vX^e$ were good, then \eqref{eq:count}, with
		$\rho=0$ and $v\ge r$, would give $\ell\le0$. If $k<0$, then
		\[
		a(-k)=2\ell-h-j<2\ell,
		\qquad r-v=c(-k)<\ell.
		\]
		Again \eqref{eq:count} rules out goodness of $p^vX^e$. Therefore every
		such constant contribution with $v<n_e$ is absent.
		
		If no divided factor is present, $h=\ell=0$. A term of $J_i$ without
		an $A$ coefficient must have $j\ge1$. In class $0$ modulo $a$,
		\eqref{eq:congruence} gives $j=ak$ with $k\ge1$ and
		$v=r+ck\ge c\ge n\ge n_e$. This contribution is also zero in the
		truncated coefficient module.
		
		We have shown that, for each good $e$ and standard $z$ monomial, every
		coefficient coming from $J_i$ has zero constant-$U$, class-$0$
		component modulo $p^{n_e}$. A nonzero coefficient chosen from
		$C/p^{n_e}C$ has a nonzero component there. The independent basis
		\eqref{eq:basis} therefore prevents a nonzero normal form from lying
		in $J_i$.
		
		\smallskip\noindent
		\emph{Zero residue characteristic.}
		Here $C=k_i$, $x_\lambda=\sigma^2Y_\lambda$, and there are no $p$
		factors. Lemma~\ref{lem:count} gives $\rho\ge\ell$ for a good $e$. A
		term with a $C$ coefficient then has scalar factor
		\[
		\sigma^{2\rho+j-2\ell+h}U^m,
		\qquad 2\rho+j-2\ell+h\ge j+h.
		\]
		For a term of $J_i$ without an $A_i$ coefficient, $j+h>0$. An $A_i$
		coefficient already has positive value, and the same inequality
		preserves it. Hence every coefficient of a good normal form coming
		from $J_i$ has positive value. A nonzero coefficient from $k_i$ has
		value zero. Basis independence again gives the required contradiction.
		This proves injectivity in both cases.
	\end{proof}
	
	\begin{remark}
		The two parts of the positive-characteristic calculation serve
		different purposes. Inequality \eqref{eq:higherror} handles arbitrary
		rational $U$ dependence in the truncated valuation ideal. Only after
		removing those terms do we extract a polynomial $U$ coefficient. The
		congruence \eqref{eq:congruence} then protects the remaining powers of
		$p$. It would not be justified to extract a constant coefficient
		before removing the valuation errors.
	\end{remark}
	
	\section{Perfect local quotients and the realization theorem}
	
	\begin{lemma}\label{lem:Pi}
		Each $P_i$ is local perfect. The algebra $T_i$, localized at the
		residue map specified by Lemma~\ref{lem:genericbasis}, is also local
		perfect and maps onto $B_i$.
	\end{lemma}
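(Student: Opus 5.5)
The plan is to prove the three assertions in turn: $P_i$ is local perfect, the localization of $T_i$ is local perfect, and that localization maps onto $B_i$. Throughout, the test for perfectness is Theorem~\ref{thm:bass}(iv): a local ring whose maximal ideal is $T$-nilpotent is perfect.

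\emph{$P_i$ is local perfect.} Put $\mathfrak m_P=(p_i,X_\lambda\mid\lambda)P_i$, with the $p_i$ omitted in zero residue characteristic. Then $P_i/\mathfrak m_P=C_i/p_iC_i=k_{i,0}$ (or $k_i$), a field.
\begin{enumerate}
\item Each $X_\lambda$ is nilpotent, since $X_\lambda^{N_\lambda}\in\calZ_i$, and $p_i^{n_i}=0$ in $P_i$. So every element of $\mathfrak m_P$ is nilpotent. An element outside $\mathfrak m_P$ is a unit of $C_i$ plus a nilpotent, hence a unit, and $P_i$ is local.
\item For $T$-nilpotence, copy the K\"onig argument of Lemma~\ref{lem:extendT}. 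Write each element of a sequence $f_1,f_2,\ldots$ in $\mathfrak m_P$ as a finite sum of $C_i$-multiples of monomials $p_i^rX^\nu$ with $r+|\nu|\ge1$.
\item Since $\calZ_i$ is generated by coefficient-monomials, a product of such sums is nonzero only if some product of chosen monomial terms lies outside $\calZ_i$. Assume every initial product $f_1\cdots f_m$ is nonzero. K\"onig's lemma then gives an infinite sequence of generators ($p_i$ or some $X_\lambda$) all of whose initial monomial products are nonzero in $P_i$.
\item This is impossible. If $p_i$ occurs $n_i$ times, the product is $0$. Otherwise, from some point on only $X$'s occur, and their images $b_{i,\lambda}$ form a sequence in the $T$-nilpotent ideal $\mathfrak m_i$ of $B_i$. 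So some initial product $p_i^rb_i^\nu$ vanishes in $B_i$. Then $(r,\nu)$ is $i$-bad, and $p_i^rX^\nu\in\calZ_i$.
\end{enumerate}

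\emph{The localization of $T_i$ is local perfect.} By Lemma~\ref{lem:abstractbasis}, $T_i$ is free and integral over $P_i$. Let $M$ be the kernel of the residue map $T_i\to k_i$, given by $X\mapsto0$, native $Z_\alpha\mapsto\xi_{i,\alpha}$ and foreign $Z_\alpha\mapsto0$.
\begin{enumerate}
\item First compute $T_i/\mathfrak m_PT_i=k_{i,0}[Z_\alpha]/(\ldots)$. The native relations give exactly $k_i$, by the basis statement of Lemma~\ref{lem:puretower}.
\item Each foreign relation is $Z^{d}-Z=Z(Z^{d-1}-1)$. The factor $Z^{d-1}-1$ is a unit at the point $Z=0$. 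So after localizing at $M$ each foreign relation just forces $Z_\alpha=0$.
\item Work in finite closed systems as in Remark~\ref{rem:finiteclosure} and pass to the direct limit. This gives $(T_i)_M/\mathfrak m_P(T_i)_M\cong k_i$.
\item Hence the maximal ideal of $(T_i)_M$ is $\mathfrak m_P(T_i)_M$, which is $T$-nilpotent by Lemma~\ref{lem:extendT}. So $(T_i)_M$ is local perfect.
\end{enumerate}

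\emph{The surjection onto $B_i$.}
\begin{enumerate}
\item Define $P_i\to B_i$ by $C_i\to B_i$ via $\eta$ and $X_\lambda\mapsto b_{i,\lambda}$. This is well defined because every bad monomial maps to zero.
\item Extend it to $T_i$ by sending native $Z_\alpha\mapsto u_{i,\alpha}$ and foreign $Z_\alpha\mapsto0$. The native relation maps to $P_\alpha(u_{i,\alpha})-b_{i,\lambda(\alpha)}=0$, since $1-e_i$ vanishes in factor $i$. The foreign relation maps to $-b_{i,\lambda(\alpha)}=0$.
\item This map is compatible with the residue maps to $k_i$. So elements outside $M$ go to units of $B_i$, and the map factors through $(T_i)_M$.
\item For surjectivity, the image contains every element of $\mathfrak m_i$, because the family $(b_\lambda)$ was chosen to include all of them. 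It also surjects onto $k_i$, since $k_{i,0}$ comes from $C_i$ and the generators $\xi_{i,\alpha}$ come from the $u_{i,\alpha}$. Together these give the whole of $B_i$.
\end{enumerate}

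I expect the main obstacle to be the $T$-nilpotence of $\mathfrak m_P$. It rests on two points that must both hold: that a nonzero product in the monomial-ideal quotient yields a surviving monomial choice, and that the $T$-nilpotence of $\mathfrak m_i$ transfers back to the $X$-generators through the definition of bad pairs. A secondary concern is handling infinitely many foreign relations in the localization, which the reduction to finite closed systems is meant to address.
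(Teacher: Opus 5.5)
Your proposal is correct and matches the paper's proof in all essentials: it uses the same K\"onig-tree argument that transfers $T$-nilpotence from $\mathfrak m_i$ back to the generators $p_i,X_\lambda$ through the definition of bad pairs, the same observation that $Z_\alpha^{d_\alpha-1}-1$ is a unit at the residue root $0$ so that foreign variables vanish after localization, and the same evaluation map onto $B_i$. The only difference is cosmetic: you compute $(T_i)_M/\mathfrak m_P(T_i)_M\cong k_i$ and apply Lemma~\ref{lem:extendT} to $(T_i)_M$ directly, whereas the paper first identifies $(T_i)_M$ with the native subalgebra $T_i^0$ and proves that algebra local perfect (and your case split on whether $p_i$ occurs $n_i$ times is unnecessary, since $p_i1_{B_i}$ already lies in $\mathfrak m_i$).
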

	\begin{proof}
		Let $N_i$ be the ideal of $P_i$ generated by $p_i,X_\lambda$ in the
		positive case, and by the $X_\lambda$ in the field case. Its quotient
		is $k_{i,0}$. To prove $T$-nilpotence, suppose a sequence in $N_i$ has
		all initial products nonzero. Write its terms as finite sums of
		nonconstant monomials in these generators, with unit coefficients. At
		each level some monomial product survives the defining
		coefficient-monomial ideal. The resulting finitely branching tree has
		an infinite branch. Evaluation in $B_i$ gives a sequence of radical
		monomials all of whose initial products are nonzero, contradicting
		$T$-nilpotence of $\mathfrak m_i$. Thus $N_i$ is $T$-nilpotent. It is
		nil, and its field quotient shows that it is the unique maximal ideal.
		Hence $P_i$ is perfect.
		
		Now consider only the native variables $Z_\alpha$, $\alpha\in\Omega_i$,
		and their equations in $T_i$, and denote this algebra by $T_i^0$.
		Modulo $N_iT_i^0$ it is exactly the field tower $k_i$ in
		Lemma~\ref{lem:puretower}. Lemma~\ref{lem:extendT} makes $N_iT_i^0$
		$T$-nilpotent. Therefore $T_i^0$ is local perfect.
		
		For a foreign index, its error variable is zero in $P_i$, so its
		equation is $Z_\alpha^{d_\alpha}-Z_\alpha=0$. At the selected residue
		root $0$, $Z_\alpha^{d_\alpha-1}-1$ is a unit, hence $Z_\alpha=0$.
		This holds for all foreign indices. Localizing $T_i$ at the selected
		maximal ideal therefore gives exactly $T_i^0$: after foreign variables
		are set to zero, every inverted element has nonzero residue in the
		already local $T_i^0$.
		
		Finally, evaluate coefficients using $\eta_i$, send every $X_\lambda$
		to $b_{i,\lambda}$, every native $Z_\alpha$ to its chosen
		$u_{i,\alpha}$, and every foreign $Z_\alpha$ to zero. The equations
		hold by the definition of the fresh errors, and all bad monomials
		vanish. The map is local. It is surjective because the coefficient
		residues and the native generators give $k_i$, and the
		$b_{i,\lambda}$ contain all of $\mathfrak m_i$.
	\end{proof}
	
	\begin{proposition}\label{prop:localperfect}
		For $D_i=D_{\mathfrak n_iD}$, the quotient
		$D_i/\sigma_iD_i$ is perfect, and there is a surjection
		$D_i\to B_i$ annihilating $\sigma_i$.
	\end{proposition}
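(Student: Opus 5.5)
The plan is to pass from the global domain $D$ to the local ring $S_i=D_0R_{\mathfrak p_i}$ of Section~\ref{sec:normal}, where the normal form is available, and then to read off $D_i/\sigma_iD_i$ from $T_i$ via Lemma~\ref{lem:Pi}. Concretely, $D_i$ is a localization of $S_i$. Indeed, $D_0\setminus\mathfrak n_i\supseteq R\setminus\mathfrak p_i^*$, because the residue map of $D_0$ restricts on $R$ to the residue at center $i$. So $D_i=(S_i)_{\mathfrak n_iS_i}$, and $\mathfrak n_iS_i$ is the kernel of the residue map at center $i$. Within $D_i$, the element $s_i$ is $\sigma_i$ times a unit, by \eqref{eq:sapprox}, and the normalization in Section~\ref{sec:normal} is unchanged. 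Hence it suffices to study $\sigma_iS_i$ after localizing at $\mathfrak n_i$.

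First I show that $J_i$ and $\sigma_iS_i$ have the same radical, with $J_i/\sigma_iS_i$ nilpotent after localization. Each normalized divided element $q\in\widehat{\calQ}_i$ satisfies $q^2=\sigma_iq''$ by \eqref{eq:Qsquare}, so $q^2\in\sigma_iS_i$. The ideal $A_i$ consists of elements of value $\ge H_i$. In the positive case $A_i\subseteq p_i^{n_i}V_i$ and $p_i=\sigma_i^{a_i/c_i}$-up-to-unit, while in the zero case $A_i$ consists of elements of value $\ge2=v(\sigma_i^2)$. I plan to show that $A_i\subseteq \sigma_iR_i$ directly. In the positive case $v(z)\ge n_i>c_i/a_i$ gives $z/\sigma_i\in A_i$-ish of value $\ge n_i-c_i/a_i$. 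That value may fall below $n_i$, so $z/\sigma_i$ need not lie in $R_i$. In that case I would use $z^2$ instead: $z^2/\sigma_i$ has value $\ge 2n_i-c_i/a_i\ge n_i$, so it lies in $A_i\subseteq R_i$. In both cases every generator of $J_i$ has square in $\sigma_iS_i$. This is only nilpotence of generators, but $J_i/\sigma_iS_i$ is then a nil ideal. That suffices for the next step in the form ``$T$-nilpotent'': a product of sufficiently many generators, at least two from one class, lies in $\sigma_iS_i$ after expanding. I expect this to need care, since $\widehat{\calQ}_i$ is infinite. I would handle it with the generating-set version of the argument in Lemma~\ref{lem:extendT}: $J_i$ is generated by a set whose pairwise products $qq'=\sigma_iq''$ lie in $\sigma_iS_i$. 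The same holds for $A_i\cdot A_i$ and $A_i\cdot q$, by the value estimate. So $J_i^2\subseteq\sigma_iS_i$, and $J_i/\sigma_iS_i$ is nilpotent of index $2$.

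Next, by Theorem~\ref{thm:normal}, $S_i/J_i\cong T_i$. Localizing at $\mathfrak n_i$, which corresponds to the residue-map maximal ideal of Lemma~\ref{lem:Pi}, gives $D_i/J_iD_i\cong (T_i)_{\mathrm{loc}}$. That ring is local perfect and surjects onto $B_i$ by Lemma~\ref{lem:Pi}. Since $J_iD_i/\sigma_iD_i$ has square zero, Lemma~\ref{lem:lifting} shows that $D_i/\sigma_iD_i$ is perfect. The surjection $D_i\to D_i/J_iD_i\to B_i$ kills $\sigma_i\in J_i$.

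The main obstacle is the first step, namely verifying $J_i^2\subseteq\sigma_iS_i$ for all mixed products. The products involving $A_i$ need the inequality $2H_i-v(\sigma_i)\ge H_i$ in $R_i$. A product $a\cdot q$ with $a\in A_i$ needs $aq/\sigma_i\in S_i$. This holds because $q$ has positive value and $aq/\sigma_i$ has value $\ge H_i+v(q)-c_i/a_i$. By the computation in Lemma~\ref{lem:integral}, $v(q)\ge(1+c_i)/a_i>c_i/a_i$, so this value is $\ge H_i$ and the element lies in $A_i\subseteq R_i$. I would check these value bounds first, and only then the identification of localizations.
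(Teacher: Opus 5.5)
Your route is the paper's. You identify $D_i/J_iD_i$ with the localized $T_i$ via Theorem~\ref{thm:normal} and Lemma~\ref{lem:Pi}, show $J_iD_i/\sigma_iD_i$ is nilpotent using $A_i^2\subseteq\sigma_iA_i$ and $qq'=\sigma_iq''$, and finish with Lemma~\ref{lem:lifting}.

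The step you single out as the main obstacle, $A_i\widehat{\calQ}_i\subseteq\sigma_iS_i$, is wrong. For $a\in A_i$ and $q\in\widehat{\calQ}_i$ your bound $v_i(aq/\sigma_i)\ge H_i$ is correct. But $A_i$ is an ideal of the valuation ring of $K$, while $aq/\sigma_i$ lies in $L\setminus K$ whenever $q$ carries a $Y$-exponent not divisible by the $N_\lambda$. A large value does not give membership in $R_i$, $S_i$ or $D_i$, which are much smaller than the valuation ring.

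In fact the inclusion can fail. Take $B=\FF_p$, a single $\lambda$ with $b_\lambda=0$, $N_\lambda=2$ and $c_1=1$, so that $\sigma_1=\tau$ and $p=\tau^3$. Then $pU_\lambda^{-1}\in A_1$ and $\sigma_1^{-1}\widehat x_\lambda=\tau^2Y_\lambda\in\widehat{\calQ}_1$. Their product divided by $\sigma_1$ is $\tau^4U_\lambda^{-1}Y_\lambda$.
\begin{itemize}
\item Every element of $D_0$ has the form $m_0+m_1Y_\lambda$ with $m_0,m_1\in K$.
\item Here $m_1$ is an element of $\tau^2\ZZ_{(p)}[\tau][U_\lambda]$ plus an element of value at least $5/3$.
\item Hence if $v(m_1)\ge 4/3$, the residue of $m_1/\tau^4$ lies in $\FF_p[\bar U_\lambda]$.
\item For $u\in D_0\setminus\mathfrak n_1$, the $Y_\lambda$-component of $u\,\tau^4U_\lambda^{-1}Y_\lambda$ gives instead a nonzero multiple of $\bar U_\lambda^{-1}$.
\end{itemize}
So $\tau^4U_\lambda^{-1}Y_\lambda\notin D_1$, hence $J_1^2D_1\not\subseteq\sigma_1D_1$. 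Your assertion that $J_iD_i/\sigma_iD_i$ has square zero is therefore false.

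You do not need the mixed products. The fix is the pigeonhole remark you already made, and it is exactly the paper's argument. Let $E_i$ and $F_i$ be the images of $A_iD_i$ and $\widehat{\calQ}_iD_i$ in $D_i/\sigma_iD_i$. Your two correct computations give $E_i^2=0$ and $F_i^2=0$. By commutativity $(E_i+F_i)^3=0$, since any product of three generators contains two from the same class. Lemma~\ref{lem:lifting} needs only nilpotence, so with index $3$ in place of $2$ the rest of your argument goes through unchanged.
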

	\begin{proof}
		Localize Theorem~\ref{thm:normal} at the selected maximal ideal.
		Lemma~\ref{lem:Pi} says that
		\[
		D_i/(A_i,\sigma_i,\widehat{\calQ}_i)D_i
		\]
		is perfect and maps onto $B_i$.
		
		In $D_i/\sigma_iD_i$, let $E_i$ and $F_i$ be the images of $A_iD_i$
		and $\widehat{\calQ}_iD_i$. Since $H_i\ge v_i(\sigma_i)$, the
		valuation cut gives
		\[
		A_i^2\subseteq\sigma_iA_i\subseteq\sigma_iR_i.
		\]
		Hence $E_i^2=0$. Relation \eqref{eq:Qsquare}, in normalized form,
		gives $F_i^2=0$. By commutativity, $(E_i+F_i)^3=0$.
		Lemma~\ref{lem:lifting} now proves that $D_i/\sigma_iD_i$ is perfect.
		The evaluation map in Lemma~\ref{lem:Pi} gives the stated surjection.
	\end{proof}
	
	\begin{proof}[Proof of Theorem~\ref{thm:main}]
		Construct $D$ as in \eqref{eq:D}, and let
		\[
		g=s_1\cdots s_r.
		\]
		This is a nonzero element of $\Jac(D)$. At the $i$-th maximal
		localization, all foreign $s_j$ are units, and $s_i$ is associated to
		$\sigma_i$. Proposition~\ref{prop:localperfect} therefore shows that
		every maximal localization of $D/gD$ is perfect. The ring is semilocal,
		so Theorem~\ref{thm:bass} gives that $D/gD$ is perfect.
		Proposition~\ref{prop:principal} proves that $D$ is almost perfect.
		
		The evaluations in Proposition~\ref{prop:localperfect} restrict to
		maps from $D_0$ to $B_i$. Every element outside $\mathfrak n_i$ has
		nonzero residue and hence maps to a unit of $B_i$. In particular, all
		denominators used in \eqref{eq:D} map to units in every $B_i$. We
		obtain a homomorphism
		\[
		\theta:D\longrightarrow\prod_{i=1}^rB_i.
		\]
		It sends the coefficient ring $R$ by \eqref{eq:baseeval}, the global
		$x_\lambda$ to $b_\lambda$, and each native $z_\alpha$ to its chosen
		lift in its own coordinate and zero elsewhere. The local normalization
		ratios are congruent to $1$ modulo $A_i$, so these descriptions agree
		with the local evaluations.
		
		Each coordinate map is surjective by Lemma~\ref{lem:Pi}. Moreover,
		$\theta(e_i)$ is the $i$-th decomposition idempotent of $B$, because
		this already holds in the coefficient quotient. Multiplying coordinate
		lifts by these idempotents and adding shows that $\theta$ is surjective
		onto the product, not just onto its separate coordinates.
		
		Let $I=\ker\theta$. At coordinate $i$, $s_i$ is killed and each
		foreign $s_j$ maps to $1$. Hence $g\in I$. Thus $I$ is nonzero and
		proper because $B\ne0$. The first isomorphism theorem gives
		$D/I\cong B$. Finally $R\subseteq D\subseteq E$ has characteristic
		zero, $D$ has dimension one, and $|\Max(D)|=r$ by construction.
	\end{proof}
	
	\begin{corollary}\label{cor:local}
		Every nonzero commutative local perfect ring is a quotient of a
		characteristic-zero local almost perfect domain of dimension one.
	\end{corollary}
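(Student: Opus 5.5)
This is the case $r=1$ of Theorem~\ref{thm:main}, so the plan is to apply that theorem and read off the extra properties. Let $B$ be a nonzero local perfect ring. By Theorem~\ref{thm:bass}, $B$ is its own Bass decomposition with a single factor, so $r=|\Max(B)|=1$. Theorem~\ref{thm:main} then gives a characteristic-zero, one-dimensional, semilocal almost perfect domain $D$ and a nonzero proper ideal $I$ with $D/I\cong B$. Since $|\Max(D)|=r=1$, the domain $D$ is local, which is exactly the statement of the corollary.

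Two points should be checked explicitly rather than taken on faith.

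First, the construction must be meaningful when $r=1$. Lemma~\ref{lem:commonfield} involves only one valuation $v_1$, so the distinctness arguments are vacuous. The approximation statements \eqref{eq:sapprox} and \eqref{eq:wapprox} reduce to the single condition at center $1$, which is satisfied by taking $s_1=\sigma_1$ and $w=p_1$ (or $w=\sigma_1^2$ in the zero-characteristic case). The idempotent $e_1$ can be taken to be $1$. In $D_0$ there is then the single maximal ideal $\mathfrak n_1$, and $D$ in \eqref{eq:D} is the localization $(D_0)_{\mathfrak n_1}$. That localization is local of dimension one by Lemma~\ref{lem:integral}.

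Second, the dimension. A local domain that is not a field and has $\dim D\le 1$ is automatically one-dimensional. Here $D$ is not a field because $g=s_1\ne0$ lies in the maximal ideal.

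The only possible obstacle is confirming that no step of the main proof silently assumed $r\ge2$. Running through the proof of Theorem~\ref{thm:main} with $r=1$, the surjectivity argument via idempotents becomes trivial, so nothing of that kind occurs. The corollary is therefore immediate.
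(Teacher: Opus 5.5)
Your proposal is correct and is the paper's argument: the corollary is the case $r=1$ of Theorem~\ref{thm:main}, and $|\Max(D)|=1$ makes $D$ local. Your extra checks are harmless. The claim that $D_0$ itself has only the maximal ideal $\mathfrak n_1$ is not needed, since when $r=1$ the ring $D$ in \eqref{eq:D} is by definition the localization of $D_0$ at $\mathfrak n_1$.
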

	\begin{proof}
		Take $r=1$ in Theorem~\ref{thm:main}.
	\end{proof}
	
	\begin{corollary}\label{cor:equivalence}
		For a nonzero commutative ring $B$, the following are equivalent:
		\begin{enumerate}[(i)]
			\item $B$ is perfect;
			\item $B$ is a quotient of an almost perfect domain by a nonzero
			proper ideal;
			\item $B$ is such a quotient of a characteristic-zero,
			one-dimensional semilocal almost perfect domain.
		\end{enumerate}
	\end{corollary}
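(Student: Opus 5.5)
The plan is to prove the cycle (iii)$\Rightarrow$(ii)$\Rightarrow$(i)$\Rightarrow$(iii), using only results already established.

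For (iii)$\Rightarrow$(ii) there is nothing to do: a characteristic-zero, one-dimensional semilocal almost perfect domain is in particular an almost perfect domain, and the same nonzero proper ideal works.

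For (ii)$\Rightarrow$(i), suppose $B\cong D/I$ with $D$ almost perfect and $I$ a nonzero proper ideal. The definition of almost perfect (every nonzero proper quotient $D/I$ is perfect) gives directly that $D/I$ is perfect. Perfectness is a ring-theoretic property, characterized for example by the descending chain condition on principal ideals in Theorem~\ref{thm:bass}, so it transfers along the isomorphism to $B$. Lemma~\ref{lem:lifting} is not even needed here.

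For (i)$\Rightarrow$(iii), let $B$ be nonzero and perfect. Theorem~\ref{thm:main} supplies a characteristic-zero, one-dimensional semilocal almost perfect domain $D$ and a nonzero proper ideal $I$ with $D/I\cong B$, which is exactly (iii).

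No step is a real obstacle, since all the depth sits in Theorem~\ref{thm:main}. The only points to check are that the hypothesis ``nonzero'' on $B$ is what makes $I$ proper, and that ``nonzero'' on $I$ is what allows the almost perfect definition to apply in (ii)$\Rightarrow$(i).
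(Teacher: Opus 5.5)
Your proposal is correct and follows the paper's proof exactly: (i)$\Rightarrow$(iii) is Theorem~\ref{thm:main}, (iii)$\Rightarrow$(ii) is immediate, and (ii)$\Rightarrow$(i) is the definition of an almost perfect domain applied to the nonzero proper ideal.
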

	\begin{proof}
		The implication (i)$\Rightarrow$(iii) is the theorem,
		(iii)$\Rightarrow$(ii) is immediate, and (ii)$\Rightarrow$(i) follows
		from the definition of an almost perfect domain.
	\end{proof}
	
	\section{Examples and consequences}
	
	We give examples showing where the different parts of the proof are
	used. None requires the radical to be finitely generated.
	
	\begin{example}\label{ex:unbounded}
		Let $k$ be a field and let
		\[
		M=\bigoplus_{m\ge1} t_m k[t_m]/(t_m^{m+1}),\qquad B=k\oplus M,
		\]
		where products between different summands are zero and the copy
		of $k$ acts by scalars. Then $B$ is local perfect and $M$ is not
		nilpotent.
	\end{example}
	\begin{proof}
		Each element of $M$ has finite support and is nilpotent. Thus $B$ is
		local with maximal ideal $M$. In any sequence in $M$, only the
		finitely many summands occurring in its first term can contribute to
		later initial products. Within the $m$-th such summand, every product
		of $m+1$ radical elements is zero. Taking the maximum over that finite
		support proves $T$-nilpotence. On the other hand, for each $q$ choose
		$m\ge q$; then $t_m^q\ne0$, so $M^q\ne0$. Theorem~\ref{thm:bass}
		applies.
	\end{proof}
	
	Corollary~\ref{cor:local} realizes this ring as a nonzero quotient of
	a local characteristic-zero APD. If $M$ is not nilpotent, the source
	cannot be Noetherian: a nonzero quotient of a one-dimensional
	Noetherian domain is Artinian, and an Artinian local ring has nilpotent
	maximal ideal \cite[Chapter 8]{AM}. Thus non-Noetherian sources are
	necessary for the realization theorem.
	
	\begin{example}
		Let $k$ and $l$ be fields of arbitrary cardinalities, and let $p$ be
		a prime. The perfect ring
		\[
		k\times C/p^nC\times l[\ep]/(\ep^2),
		\]
		where $C$ is any DVR with uniformizer $p$ and $n\ge1$, is a quotient
		of a characteristic-zero APD with three maximal ideals. No
		compatibility between the characteristics of $k,l$ and $p$ is needed.
	\end{example}
	
	This uses the common valued field rather than a common coefficient
	field. A characteristic-zero field factor and a positive-characteristic
	factor cannot be patched by keeping the former as a global subfield of
	the source: that would make all nonzero integers units globally. In
	the construction, the coefficient field occurs only in its own maximal
	localization. The global pullback keeps the appropriate residues at the
	other centers.
	
	\begin{example}
		For $B=\ZZ/p^n\ZZ$ with $n>1$, a parameter mapped to zero cannot be
		chosen equal to $p$. Our construction uses instead
		\[
		p=\tau^{2c+1},\qquad \sigma=\tau^c,\qquad c\ge n.
		\]
		The identity $\sigma^{2c+1}=p^c$ is compatible with the target
		relation $p^n=0$. The normal-form calculation proves more than
		compatibility: all the extra divided products still leave the nonzero
		powers $1,p,\ldots,p^{n-1}$ intact.
	\end{example}
	
	The next two consequences record useful properties of the source
	without further construction.
	
	\begin{corollary}
		The source $D$ in Theorem~\ref{thm:main} can be chosen with
		$\Spec(D)=\{(0),\mathfrak n_1,\ldots,\mathfrak n_r\}$ and with
		\[
		\Frac(D)=D[1/g]=\bigcup_{m\ge0}g^{-m}D.
		\]
	\end{corollary}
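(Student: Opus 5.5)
We take the domain $D$ and the element $g=s_1\cdots s_r$ constructed in the proof of Theorem~\ref{thm:main}, and prove both assertions directly from properties already established there. No new construction is needed.

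For the spectrum, recall that $D$ is a one-dimensional semilocal domain whose maximal ideals are exactly the extended ideals $\mathfrak n_1D,\ldots,\mathfrak n_rD$; this is how $D$ was defined in \eqref{eq:D}. Let $\mathfrak q$ be a nonzero prime of $D$. Then $\operatorname{ht}\mathfrak q\ge1$, and since $\dim D=1$ the prime $\mathfrak q$ must be maximal. Hence $\mathfrak q$ is one of the $\mathfrak n_iD$. Each $\mathfrak n_iD$ is nonzero, because it contains $s_i\ne0$. Together with the zero ideal, this gives
\[
\Spec(D)=\{(0),\mathfrak n_1,\ldots,\mathfrak n_r\}.
\]

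For the fraction field, put $K'=\Frac(D)$. The ring $D[1/g]$ lies between $D$ and $K'$, so it is a localization of $D$. Its primes correspond to the primes of $D$ not containing $g$. Since $g\in\Jac(D)$, the element $g$ lies in every $\mathfrak n_i$, so the only such prime is $(0)$. A domain whose only prime is $(0)$ is a field: any nonzero nonunit would lie in some maximal ideal, which would then be a nonzero prime. Thus $D[1/g]$ is a field containing $D$ and contained in $K'$, and therefore $D[1/g]=K'$.

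Finally, $D[1/g]=\bigcup_{m\ge0}g^{-m}D$, because every element of $D[1/g]$ has the form $d/g^m$ with $d\in D$, and these sets increase with $m$. Concretely, for $0\ne b\in D$ we have $g\in\sqrt{bD}$, as in the proof of Proposition~\ref{prop:principal}. So $g^m=bc$ for some $m$ and some $c\in D$, which gives $1/b=c/g^m$.

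The only point needing care is that $g$ is nonzero and lies in $\Jac(D)$. Both facts come from the construction: each $s_i$ is nonzero in $F$, and $s_i$ lies in $\mathfrak n_i$ because its residue at center $i$ is zero.
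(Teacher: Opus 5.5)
Your proposal is correct and matches the paper's proof: the spectrum follows from one-dimensionality and semilocality, and $\Frac(D)=D[1/g]$ follows from $g^m\in bD$, obtained via the radical argument in the proof of Proposition~\ref{prop:principal}. Your additional argument, that $D[1/g]$ is a localization whose only prime is $(0)$, rests on the same fact that $g$ lies in every nonzero prime of $D$.
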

	\begin{proof}
		Its dimension and semilocality give the stated spectrum. For
		$0\ne b\in D$, the proof of Proposition~\ref{prop:principal} gives
		$g^m\in bD$ for some $m$, hence $1/b\in D[1/g]$.
	\end{proof}
	
	\begin{corollary}
		If $J$ is any ideal of a commutative perfect ring $B$, then the same
		domain $D$ realizing $B$ also realizes $B/J$ whenever $B/J\ne0$, with
		a nonzero kernel.
	\end{corollary}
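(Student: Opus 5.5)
Compose the surjection $\theta:D\to B$ from the proof of Theorem~\ref{thm:main} with the canonical projection $\pi:B\to B/J$. The domain $D$ is unchanged; only the kernel grows. Put $I'=\ker(\pi\circ\theta)=\theta^{-1}(J)$. Since both $\theta$ and $\pi$ are surjective, $\pi\circ\theta$ is surjective, and the first isomorphism theorem gives $D/I'\cong B/J$.

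It remains to check that $I'$ is a nonzero proper ideal.
\begin{enumerate}[(i)]
\item \emph{Properness.} $I'$ is proper exactly because $B/J\ne0$, that is, $J\ne B$.
\item \emph{Nonzero.} The element $g=s_1\cdots s_r$ lies in $I=\ker\theta\subseteq I'$, and $g\ne0$. So $I'\supseteq I\ne0$.
\end{enumerate}

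Two facts give a consistency check.
\begin{enumerate}[(i)]
\item By Lemma~\ref{lem:lifting}, $B/J$ is perfect, since it is a quotient of $B$. This also follows from Corollary~\ref{cor:equivalence}, because $D$ is almost perfect.
\item $B/J$ may have fewer local factors than $B$. Hence the equality $|\Max(D)|=r$ in Theorem~\ref{thm:main} need not hold for $B/J$. The corollary claims only realization with a nonzero kernel, so nothing about maximal ideals has to be checked.
\end{enumerate}

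There is no real obstacle; the only point needing care is the nonzero-kernel claim, which comes from $g\in\ker\theta$.
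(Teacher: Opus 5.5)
Your proof is correct and matches the paper's argument: compose $\theta$ with the projection $B\to B/J$, and observe that the new kernel $\theta^{-1}(J)$ contains the nonzero element $g=s_1\cdots s_r$ and is proper because $B/J\ne0$. Your side remarks are accurate but not needed: $B/J$ is perfect and may have fewer local factors, so the count $|\Max(D)|=r$ is not claimed for $B/J$.
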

	\begin{proof}
		Compose $D\twoheadrightarrow B$ with $B\twoheadrightarrow B/J$. The
		new kernel contains the original nonzero element $g$.
	\end{proof}
	
	The proof separates three kinds of relations. The coefficient
	quotients keep the finite characteristic. Triangular monic equations
	keep the residue extension. Bad monomials keep the sequential
	vanishing needed for perfectness. Additional additive relations in $B$
	are imposed only by the final evaluation map. This separation explains
	why no nilpotence bound on the whole radical is necessary, and why
	purely inseparable residue extensions need not split inside the target
	ring.

\end{document}